\newcommand{\diagnode}[1]{\fill #1 circle (.1);}
\newcommand{\distorbit}[1]{\draw #1 circle (.2);}
\newtheorem{theorem}{Theorem}[section]
\newtheorem{lemma}[theorem]{Lemma}
\theoremstyle{definition}
\newtheorem{remark}[theorem]{Remark}
\newtheorem{example}[theorem]{Example}
\newcommand{\B}{\mathsf{B}}
\newcommand{\F}{\mathsf{F}}
\newcommand{\C}{\mathbb{C}} 
\newcommand{\G}{\mathbf{G}} 
\newcommand{\PG}{\mathsf{PG}}
\newcommand{\PSL}{\mathsf{PSL}}
\newcommand{\PGO}{\mathsf{PGO}}
\newcommand{\Z}{\mathbb{Z}}
\newcommand{\oct}{\mathcal{O}}
\newcommand{\mouf}{\mathbb{M}}
\newcommand{\lie}{\mathcal{L}}
\newcommand{\cartan}{\mathcal{H}}
\newcommand{\I}{{I}}
\newcommand{\mixed}{\mathrm{mixed}}
\DeclareMathOperator{\ch}{char}
\DeclareMathOperator{\tr}{T}
\DeclareMathOperator{\R}{\mathbb{R}}
\DeclareMathOperator{\N}{N}
\DeclareMathOperator{\Fix}{Fix}
\DeclareMathOperator{\Stab}{Stab}
\DeclareMathOperator{\diag}{diag}
\DeclareMathOperator{\ex}{exp}
\DeclareMathOperator{\Sym}{Sym}
\numberwithin{equation}{section}
\begin{document}

\title{Moufang sets of mixed type $\F_4$}
%
%

\author{Elizabeth Callens \and Tom De Medts}

\date{\today}

\maketitle

\begin{abstract}
	Moufang sets were introduced by Jacques Tits in order to understand isotropic linear algebraic groups of relative rank one,
	but the notion is more general.
	We describe a new class of Moufang sets, arising from so-called mixed groups of type $\F_4$ in characteristic $2$,
	obtained as the fixed point set under a suitable involution.
\end{abstract}

\section{Introduction}

Moufang sets were introduced by Jacques Tits in \cite{Durham} as an axiomization of the isotropic simple algebraic
groups of relative rank one, and they are, in fact, the buildings corresponding to these algebraic groups,
together with some of the group structure (which comes from the root groups of the algebraic group).
In this way, the Moufang sets are a powerful tool to study these algebraic groups.
On the other hand, the notion of a Moufang set is more general, and includes many more examples that do not directly
arise by this procedure. In fact, it is still a wide open question whether every Moufang set is, in some sense,
of algebraic origin.

In this paper, we are studying Moufang sets arising from so-called {\em mixed groups} of type $\F_4$.
These groups exist only over fields of characteristic $2$, and they are defined over a {\em pair}
of fields $(k, \ell)$ such that $\ell^2 \leq k \leq \ell$.
There has been an increasing interest in a systematic study of these inseparable situations over non-perfect fields,
most notably by the recent work on {\em pseudo-reductive groups}
by B.~Conrad, O.~Gabber and G.~Prasad~\cite{CGP}.

Formally speaking, a {\em Moufang set} is a set $X$ together with a collection of groups
$\bigl( U_x \leq \Sym(X) \bigr)_{x \in X}$, such that each $U_x$ acts regularly on $X \setminus \{x\}$,
and such that%
\footnote{We will always write our permutation actions on the right.}
$U_x^\varphi = U_{x.\varphi}$ for all $\varphi \in G^\dagger := \langle U_x \mid x \in X \rangle$.
The groups $U_x$ are called the {\em root groups} of the Moufang set, and the group $G^\dagger$ is called
its {\em little projective group}.

The groups arising from algebraic groups of type $\F_4$ have already been described explicitly in \cite{DVM}.
The techniques used in that paper, however, rely heavily on the fact that the algebraic groups of type $\F_4$ arise
as the automorphism groups of certain $27$-dimensional algebraic structures known as Albert algebras, and such a
description is not available for the mixed groups of type $\F_4$.

We therefore take a completely different approach, inspired by \cite{MVM} and \cite{Steinbach1}, using the description of the corresponding Chevalley groups,
and replacing the geometric ingredients of the algebraic approach (namely polarities of the octonion plane) by
group theoretic ingredients (namely involutions of the Chevalley groups).
More specifically, we show how to construct a split saturated BN-pair of rank one from a well-chosen involution.
Such a BN-pair is essentially equivalent to a Moufang set.

The paper is organized as follows.
In section~\ref{se:setup}, we recall the necessary basics about Chevalley groups, Moufang sets, and BN-pairs.
Section~\ref{se:mixed} deals with the basic theory of mixed groups as introduced by J.~Tits.
In section~\ref{se:mixedgroups}, we specifically look at mixed Chevalley groups of type $\F_4$,
and in section~\ref{se:BN}, we study involutions of these mixed groups such that the centralizer of the involution
is a split BN-pair of rank one.
These BN-pairs give rise to the Moufang sets we are interested in, and in section~\ref{se:mixedF4}
we proceed to explicitly describe these Moufang sets.
This culminates in our main result (Theorem~\ref{th:main}).
In the last section~\ref{se:alg}, we point out that in the algebraic case,
we recover the known description of algebraic Moufang sets of type $\F_4$ as in \cite{DVM}.

\paragraph*{Acknowledgments}

It is our pleasure to thank Bernhard M\"uhlherr, who prophetically predicted the existence of the mixed Moufang sets of type $F_4$,
and shared its mysteries with us.
We are grateful to Hendrik Van Maldeghem for the lively geometric discussions about this topic.
Finally, we thank the referee, for very carefully reading our paper, correcting some of our mistakes and making good suggestions
to fill in certain details that were initially missing.
In particular, the referee pointed out how to improve both the result and the proof of Theorem~\ref{T(K,L)} below;
see also Remark~\ref{rem:referee}.


\section{Setup}\label{se:setup}

\subsection{Chevalley groups}

We briefly recall some basics about Chevalley groups that we will need in the sequel.
Our main reference is \cite{Carter}.

\subsubsection{The Cartan decomposition of a complex simple Lie algebra}

Let $\lie$ be a Lie algebra over $\C$, with Lie bracket $[ \cdot , \cdot ]$.
A Cartan subalgebra $\cartan$ is a subalgebra of $\lie$ which is nilpotent, and such that
$\cartan$ is not contained as an ideal in any larger subalgebra of $\lie$, i.e.\@ 
if $x \in \lie$ is such that $[x,h] \in \cartan$ for all $h \in \cartan$, then $x \in \cartan$.

Now if $\lie$ is simple over $\C$, then $\lie$ can be decomposed into a direct sum of $\cartan$ with a number of
one-dimensional $\cartan$-invariant subspaces:
\[ \lie = \cartan \oplus \lie_{r_1} \oplus \dots \oplus \lie_{r_m} . \]
The one-dimensional subspaces $\lie_{r_i}$ are called the root spaces of $\lie$ (w.r.t.\@~$\cartan$).

In each one-dimensional subspace $\lie_r$, we choose a non-zero element $e_r$. Then for each $h\in\cartan$, we have
\begin{equation}\label{H}
[he_r]=r(h)e_r
\end{equation} 
for some $r(h)\in \C$.
This defines a linear map
\[ r \colon \mathcal{H}\to \C;h\mapsto r(h) . \]
It can be shown that each element $f$ of the dual space $\cartan^*$ of $\mathcal{H}$ is of the form
\[f:\mathcal{H}\to \C;h\mapsto (x,h)\]
for some unique $x\in\mathcal{H}$, where $(\cdot,\cdot)$ is the Killing form on $\mathcal{H}$.
In this way $r$ corresponds to a unique element in $\mathcal{H}$, which we also denote by $r$.
We can repeat this procedure for each root space and denote by $\Psi$ the subset of $\mathcal{H}$ we obtain in this way. 

One can show that $\Psi$ forms a root system in $\mathcal{H}_{\R}$, where $\mathcal{H}_{\R}$ is the set of linear combinations of elements of $\Psi$ with real coefficients.
As the Killing form induces an isomorphism between $\mathcal{H}$ and its dual space $\mathcal{H}^*$,
there is a corresponding root system $\Phi$ in $\mathcal{H}^*$. The elements of $\cartan^*$ are called the {\em roots} of $\lie$ (w.r.t.\@~$\cartan$).

We can rewrite the Cartan decomposition of $\lie$ as
\[ \lie = \cartan \oplus \bigoplus_{r \in \Phi} \lie_r , \]
in such a way that for any pair of roots $r,s \in \Phi$, we have
\begin{alignat*}{2}
	[\lie_r, \lie_s] &= \lie_{r+s} \quad  && \text{ if } r+s \in \Phi , \\
	[\lie_r, \lie_s] &= 0 && \text{ if } r+s \not\in \Phi, r+s \neq 0 , \\
	[\lie_r, \lie_{-r}] &= \C r , \\
	[\cartan, \lie_r] &= \lie_r .
\end{alignat*}
These relations can be made more precise.
Indeed, if $r$ is any root, then the element $h_r\in\cartan$ corresponding to $(2r)/\langle r,r\rangle$ under the isomorphism is called the {\em coroot} of $r$.
Now let $\Pi$ be a set of fundamental roots for $\Phi$;
if we choose $e_{-r}\in\lie_{-r}$ such that $[e_r,e_{-r}]=h_r$ for each $r \in \Phi$ then
\begin{equation}\label{eq:Chevalleybasis}
	\{ h_r \mid r \in \Pi \} \cup \{ e_r \mid r \in \Phi \}
\end{equation}
forms a basis for $\lie$, called a {\em Chevalley basis}, satisfying
\begin{alignat*}{2}
	[h_r, h_s] &= 0 , \\
	[h_r, e_s] &= A_{rs} e_s , \\
	[e_r, e_{-r}] &= h_r , \\
	[e_r, e_s] &= 0 && \text{ if } r+s \not\in \Phi, r+s \neq 0 , \\
	[e_r, e_s] &= N_{rs} e_{r+s} \quad  && \text{ if } r+s \in \Phi .
\end{alignat*}
The constants $A_{rs}$ are easily determined by the root system,
as is the absolute value of the constants $N_{rs}$;
determining the {\em sign} of the $N_{rs}$ is much more delicate, however.
Since we will be working over fields of characteristic~$2$, we need not worry about these signs.

\subsubsection{Chevalley groups}

Let $\lie$ be a simple Lie algebra over $\C$ with Chevalley basis as in~\eqref{eq:Chevalleybasis}.
Now let $\lie_{\Z}$ be the subset of $\lie$ of all {\em integral} linear combinations of the basis elements;
then $\lie_{\Z}$ becomes a Lie algebra over $\Z$.

Now let $k$ be any field.
Then we can form the tensor product of the additive group of $k$ with the additive group of $\lie_\Z$, and define
\[ \lie_k = k \otimes \lie_{\Z} , \]
which is in a natural way a Lie algebra over $k$.

We now introduce certain automorphisms of $\lie_k$.
For every root $r \in \Phi$ and every element $t \in k$, we define an automorphism $u_r(t)$ as follows:
\begin{alignat*}{3}
	&u_r(t) \cdot e_r &&= e_r, \\
	&u_r(t) \cdot e_{-r} &&= e_{-r} + t h_r - t^2 e_r, \\
	&u_r(t) \cdot e_s &&= \sum_{i=0}^q M_{r,s,i} t^i e_{ir+s} \quad && \text{ if } r,s \text{ are linearly independent}, \\
	&u_r(t) \cdot h_s &&= h_s - A_{sr} t e_r \quad && \text{ for } s \in \Pi ,
\end{alignat*}
where in the rule for $u_r(t) \cdot e_s$, $q$ is the largest integer such that $qr + s \in \Phi$, and where the constants
$M_{r,s,i}$ are defined in terms of the structure constants $N_{rz}$.

The {\em (adjoint) Chevalley group} of type $\lie$ over $k$ is now defined as the group
\[ \lie(k) := \langle u_r(t) \mid r \in \Phi, t \in k \rangle . \]
It turns out that this group is independent of the choice of the Chevalley basis;
its isomorphism type depends only on $\lie$ and $k$.
Note that $u_r(s) u_r(t) = u_r(s+t)$ for all $r \in \Phi$ and all $s,t \in k$.
The subgroups $U_r = \{ u_r(t) \mid t \in k \}$ are called the {\em root subgroups} of $G = \lie(k)$.

\begin{remark}
\begin{compactenum}[(i)]
    \item
	If $k = \C$, then $u_r(t) = \exp(t \operatorname{ad} e_r)$, and in fact, this is where the definition of the automorphisms
	$u_r(t)$ in the general case comes from.
    \item
	We have been following Chevalley's original approach to construct Chevalley groups.
	This construction has later been generalized to include not only adjoint groups but also more general
	connected semisimple split linear algebraic groups.
	The corresponding building, and consequently also the Moufang set that we will construct, does not detect this distinction
	(its little projective group always corresponds to the adjoint representation)
	so it is no loss of generality to restrict to the adjoint case.
    \item
	The relation between (not necessarily adjoint) Chevalley groups and linear algebraic groups is as follows.
	Let $k$ be an arbitrary field, and let $K$ be its algebraic closure.
	Then a Chevalley group $\lie(K)$ is a connected semisimple linear algebraic group $\G$ over~$K$ of type $\lie$,
	defined and split over $k$ (and in fact over the prime subfield of $k$).
	The Chevalley group $\lie(k)$ is the commutator subgroup of the group $\G(k)$ of $k$-rational points of $\G$.
\end{compactenum}
\end{remark}

An important feature of Chevalley groups and of linear algebraic groups is that the root groups satisfy
certain {\em commutator relations}.
We will discuss those relations (and the extension of this concept to mixed groups) in section~\ref{se:mixed} below.

\subsubsection{Weyl groups and subgroups of Chevalley groups}\label{Chevalley}

We introduce some notation for certain important subgroups of Chevalley groups that we will need in the future.

Let $\Phi$ be the root system associated to an arbitrary Chevalley group $\lie(k)$, let $\Pi$ be a fundamental root system of $\Phi$, $\Phi^+$ be the set of positive roots and $\Phi^{-}$ be the set of negative roots of $\Phi$.
One can associate to every root $r\in\Phi$ a reflection $w_r$; the group generated by all these reflections is called the Weyl group $W$ of $\lie(k)$.
More information on Weyl groups can be found in \cite[Chapter 2]{Carter}.

Furthermore, every Chevalley group $G$ has an associated {\em BN-pair}, i.e.\@ 
a pair of subgroups $(B,N)$ of $G$ such that the following conditions hold.
\begin{compactenum}[(1)]
\item $G=\langle B,N\rangle$.
\item $B\cap N$ is a normal subgroup of $N$.
\item $W=N/B\cap N$ is generated by elements $w_i$ such that $w_i^2=1$, $i\in I$.
For each $i \in I$, we choose a preimage $n_i \in N$ of $w_i$.
\item For each $i \in I$ and each $n \in N$, we have
\[
B n_i B \cdot B n B \subseteq B n_i n B \cup B n b.
\]
\item For each $i \in I$, we have $n_i B n_i \neq B$.
\end{compactenum}
For each $J \subseteq I$, we define
\[ W_J := \langle w_j \mid j \in J \rangle . \]
A subgroup $P$ of $G$ is called a parabolic subgroup if $P$ contains $B$ or some conjugate of $B$.
One can show that the parabolic subgroups containing $B$ are of the form $B N_J B$ with $N_J$ the preimage of $W_J$ under the natural projection from $N$ to $W$.
A consequence of the axioms is that the group $G$ has a so-called {\em Bruhat decomposition} $G=BNB$.

In our setting of Chevalley groups,
we define $n_r(t):=u_r(t)u_{-r}(-t^{-1})u_{r}(t)$, $n_r:=n_r(1)$ and $h_r(t):=n_r(t)n_r(-1)$ for all $r\in\Phi$ and all $t\in k^\times$.
Let
\begin{align*}
N&:=\langle n_r(t)\mid r\in\Phi, t\in k^\times \rangle,\\
H&:= \langle h_r(t)\mid r\in\Phi, t\in k^\times \rangle,\\
U&:=\langle u_r(t)\mid r\in\Phi^{+}, t\in k\rangle\ \text{and}\\
B&:=UH .
\end{align*}
Then one can show that $(B,N)$ forms a BN-pair for $\lie(k)$ with $B\cap N=H$ and $N/H=W$ with $W$ the Weyl group of $\lie(k)$. 

Finally, we introduce some standard notation. Let $J$ be a subset of $\Pi$, then we define $\Phi_J$ as $\Phi\cap \langle J\rangle$ and $W_J$ as the Weyl group generated by all $w_{\alpha}$ with $\alpha\in J$. We denote by $w_0$ the longest element in $W$ and similarly $w_0^{J}$ is the longest element in $W_J$.
We then define
\begin{align*}
	U_J &:= \langle U_r\mid r\in\Phi^{+}\setminus \Phi_J\rangle, \\
	L_J &:= \langle H,U_r\mid r\in \Phi_J\rangle, \\
	P_J &:= U_JL_J.
\end{align*}

\subsection{Moufang sets}

In this section, we recall some of the basics of Moufang sets, and we refer to \cite{course} for more details.

A {\em Moufang set} $\mouf = \bigl( X, (U_x)_{x \in X} \bigr)$ is a set $X$ together with a collection of groups
$U_x \leq \Sym(X)$, such that for each $x \in X$:
\begin{compactenum}[(1)]
    \item
	$U_x$ fixes $x$ and acts sharply transitively on $X \setminus \{ x \}$;
    \item
	$U_x^\varphi = U_{x\varphi}$ for all $\varphi \in G := \langle U_z \mid z \in X \rangle$.
\end{compactenum}
The group $G$ is called the {\em little projective group} of the Moufang set.

A typical example is given by the group $G = \PSL(2,k)$ acting on the projective line $X = \mathbb{P}^1(k) = k \cup \{ \infty \}$.

\subsubsection{Moufang sets and algebraic groups of relative rank one}

One of the main motivations (but certainly not the only one) for studying Moufang sets, is that they provide a tool to understand
linear algebraic groups of relative rank one.
We will briefly explain the connection.

So suppose that $\G$ is an absolutely simple algebraic group defined over a field $k$, and assume that $\G$ has $k$-rank one.
Let $X$ be the set of all $k$\nobreakdash-parabolic subgroups of $\G$.
For each $x \in X$, we let $U_x$ be the root subgroup of the $k$-parabolic subgroup $x$
(which coincides with the $k$-unipotent radical of $x$).
Then $\bigl( X, (U_x)_{x \in X} \bigr)$ is a Moufang set, which we will denote by $\mouf(\G, k)$.
If we define $\G^+(k)$ to be the subgroup of $\G(k)$ generated by all the root subgroups,
then $\G^+(k)$ modulo its center acts faithfully on $X$.

As an example, we could consider groups of the form $\G = \PSL(2,D)$, where $D$ is a division algebra of degree $d$ over a field $k$;
in this case, $\G$ is an algebraic group of type $\mathsf{A}_{2d-1}$ of $k$-rank one.
Note that $\PSL(2,D)$ still gives rise to a Moufang set if $D$ is infinite-dimensional over its center,
but in this case the Moufang set no longer arises from an algebraic group.

\subsubsection{An explicit construction of Moufang sets}

We will now explain how any Moufang set can be reconstructed from a single root group together with one additional permutation \cite{MoufJor}.

Let $(U, +)$ be a group, with identity $0$, and where the operation $+$ is {\em not necessarily commutative}.
Let $X = U \cup \{ \infty \}$, where $\infty$ is a new symbol.
For each $a\in U$, we define a map $\alpha_a \in \Sym(X)$ by setting
\begin{equation}\label{eq:alpha}
	\alpha_a \colon \begin{cases}
		\infty \mapsto \infty \\
		x \mapsto x+a & \text{ for all $a \in U$}.
	\end{cases}
\end{equation}
Let
\begin{equation*}
U_\infty := \{\alpha_a\mid a\in U\} \,.
\end{equation*}
Now let $\tau$ be a permutation of $U^*$.
We extend $\tau$ to an element of $\Sym(X)$ (which we also denote by $\tau$) by setting $0^\tau=\infty$ and $\infty^\tau=0$.
Next we set
\begin{equation}\label{eq:defU}
	U_0 := U_\infty^\tau \text{ and } U_a := U_0^{\alpha_a}
\end{equation}
for all $a\in U$, where $U_x^\varphi$ denotes conjugation inside $\Sym(X)$. Let
\begin{equation}\label{eq:mouf}
	\mouf(U,\tau) := \bigl( X,(U_x)_{x\in X} \bigr)
\end{equation} 
and let
\begin{equation*}
	G := \langle U_\infty,U_0 \rangle = \langle U_x \mid x \in X \rangle \,.
\end{equation*}
Then $\mouf(U, \tau)$ is not always a Moufang set, but every Moufang set can be obtained in this way.
The next lemma shows us how to do this.
\begin{lemma}\label{Utau}
Let $\mouf=(X,(U_x)_{x\in X})$ be Moufang set. Pick two elements $0,\infty\in X$ and define $U$ as $X\setminus\{\infty\}$.\\
For every $a\in U$, define $\alpha_a\in U_{\infty}$ as the unique element such that $\alpha_a(0)=a$. Let $a+b:=\alpha_b(a)$ for every $a,b\in U$ and $\tau\in\Sym(X)$ be a permutation interchanging $0$ and $\infty$ such that $U_{\infty}^{\tau}=U_{0}$. Then $\mouf=\mouf(U,\tau)$.
\end{lemma}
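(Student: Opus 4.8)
The plan is to verify directly that the data $\mouf(U,\tau) = \bigl(X,(U_x)_{x\in X}\bigr)$ produced by the construction agrees with the given Moufang set $\mouf$, root group by root group; since both have underlying set $X = U \cup \{\infty\}$, it suffices to match the groups $U_x$ for every $x \in X$. Before doing so, I would first check that $(U,+)$ is genuinely a group and that the permutations $\alpha_a$ appearing in the construction are exactly the elements of $U_\infty$. By condition~(1) of the definition of a Moufang set, $U_\infty$ acts sharply transitively on $U = X \setminus \{\infty\}$, so for each $a \in U$ there is a unique $\alpha_a \in U_\infty$ with $\alpha_a(0) = a$; in particular $\alpha_0$ fixes $0$ and lies in $U_\infty$, whence $\alpha_0 = \id$. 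To obtain associativity and inverses I would observe that for $b,c \in U$ both $\alpha_{b+c}$ and the product of $\alpha_b$ and $\alpha_c$ (in the appropriate order) lie in $U_\infty$ and send $0$ to $b+c$, so they coincide by sharp transitivity; this shows that $a \mapsto \alpha_a$ is a bijection intertwining $+$ with composition in $U_\infty$, so $(U,+)$ is a group with identity $0$.

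Next I would identify the ingredients of the construction with the original ones. The map $\alpha_a$ of the Moufang set fixes $\infty$ (it lies in $U_\infty$) and sends $x \mapsto x + a$ for every $x \in U$ (this is merely the definition $x + a = \alpha_a(x)$), so it is precisely the permutation defined in~\eqref{eq:alpha}. Consequently the group $U_\infty = \{\alpha_a \mid a \in U\}$ built in the construction is literally the root group $U_\infty$ of $\mouf$. For the base point $0$, the construction sets $U_0 := U_\infty^\tau$, and this equals the original $U_0$ precisely by the hypothesis $U_\infty^\tau = U_0$ imposed on $\tau$.

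It remains to match $U_a$ for the remaining $a \in U$. The construction defines $U_a := U_0^{\alpha_a}$, where $\alpha_a \in U_\infty \le G$. Since $\alpha_a$ maps $0$ to $a$, condition~(2) of the definition of a Moufang set yields $U_0^{\alpha_a} = U_{a}$, the root group of $\mouf$ at $a$. Hence every root group of $\mouf(U,\tau)$ coincides with the corresponding root group of $\mouf$, and therefore $\mouf = \mouf(U,\tau)$.

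The argument is essentially a bookkeeping exercise, and I expect the only delicate points to be administrative rather than conceptual: one must keep the action convention consistent throughout — in particular when deducing $\alpha_{b+c}$ from a product of $\alpha_b$ and $\alpha_c$, and when reading off the image of $0$ under $\alpha_a$ — and one must be careful that the $\alpha_a$ of~\eqref{eq:alpha} really agree with the original permutations on all of $U$, not merely at $0$. Once the group structure on $U$ is in place, the real engine of the proof is condition~(2), which is exactly what allows the single conjugation recipe $U_a = U_0^{\alpha_a}$ to reproduce all the root groups at once.
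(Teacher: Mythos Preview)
Your proof is correct and follows exactly the approach the paper has in mind; the paper simply declares the result ``obvious from the above construction of $\mouf(U,\tau)$'' without spelling out any of the verifications. What you have written is precisely the unpacking of that one-line proof, and the only caution---which you already flag---is to keep the right-action convention consistent when matching $\alpha_{b+c}$ with $\alpha_b\alpha_c$.
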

\begin{proof}
This is obvious from the above construction of $\mouf(U, \tau)$.
\end{proof}
Note that, for a given Moufang set, the map $\tau$ is certainly not unique: different choices for $\tau$ can give rise to the same Moufang set.

\subsubsection{Split BN-pairs of rank one}

We introduce the notion of a saturated split BN-pair of rank one because of the correspondence with Moufang sets. In the context of Chevalley groups, it sometimes is more natural to work with BN-pairs. We show how to construct a Moufang set from these BN-pairs. 

A {\em BN-pair of rank one} in a group $G$ is a system $(B,N)$ of two subgroups $B$ and $N$ of $G$ such that
the following axioms hold:
\begin{compactenum}[(i)]
    \item
	$G=\langle B,N\rangle$.
    \item
	$H:=B\cap N \unlhd N$.
    \item
        There is an element $\omega\in N\setminus H$ with $\omega^2\in H$ such that $N=\langle H,w\rangle$, $G=B\cup B\omega B$ and $\omega B\omega\neq B$.
\end{compactenum}
We call such a pair split if additionally
\begin{compactenum}[(i)]\setcounter{enumi}{3}
    \item
       There exists a normal subgroup $U$ of $B$ such that $B=U\rtimes H$.
\end{compactenum}
holds, and saturated if additionally
\begin{compactenum}[(i)]\setcounter{enumi}{4}
    \item
       $H=B\cap B^{\omega}$.
\end{compactenum}

\begin{lemma}\label{BN-pair}
Let $G$ be a group with a saturated split BN-pair of rank one, let $X:=\{U^g\mid g\in G\}$ be the set of conjugates of $U$ in G. Denote by $V_x$, the element $x\in X$ viewed as a subgroup of $G$. Then $(X,(V_x)_{x\in X})$ is a Moufang set.
\end{lemma}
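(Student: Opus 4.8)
The plan is to realise $\bigl(X,(V_x)_{x\in X}\bigr)$ directly from the conjugation action of $G$ on its set of $U$-conjugates. Let $\rho\colon G\to\Sym(X)$ be the homomorphism given by the right action $x^g:=g^{-1}xg$, and view each $V_x$ inside $\Sym(X)$ via $\rho$. Since every element of $X$ is a conjugate of $U$, the group $G$ acts transitively on $X$; I fix the base point $\infty:=U$. With this setup, axiom~(2) of a Moufang set is essentially automatic: the subgroup attached to a point $y\in X$ is literally $y$ itself, so for $\varphi\in\langle V_z\mid z\in X\rangle\le G$ we have $V_{x^\varphi}=x^\varphi=(V_x)^\varphi$, and because $\rho$ is a homomorphism, conjugation in $G$ is carried to conjugation in $\Sym(X)$; hence $V_{x\varphi}=V_x^\varphi$.

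The real content is axiom~(1). First I would identify $\Stab_G(\infty)=\N_G(U)$ with $B$. The inclusion $B\le\N_G(U)$ is immediate, since $U\unlhd B$. For the converse I would invoke the Bruhat decomposition $G=B\sqcup B\omega B$ from axiom~(iii): if $g=b_1\omega b_2\in\N_G(U)$ with $b_i\in B$, then, since $b_1,b_2$ normalise $U$, one computes $U^g=(U^\omega)^{b_2}$, so $U^g=U$ would force $U^\omega=U$. But $U^\omega\ne U$: from $\omega^2\in H\le B$ one gets $B^\omega=\omega B\omega\ne B$ by axiom~(iii), while $H^\omega=H$ because $H\unlhd N\ni\omega$ by axiom~(ii); as $B=U\rtimes H$, the equality $U^\omega=U$ would give $B^\omega=U H^\omega=B$, a contradiction. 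Hence $\N_G(U)=B$, and the orbit map yields a $G$\nobreakdash-equivariant bijection $X\xrightarrow{\ \sim\ }B\backslash G,\ U^g\mapsto Bg$, under which $\infty$ corresponds to the coset $B$.

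It then remains to show that $V_\infty=U$ fixes $\infty$ and acts sharply transitively on $X\setminus\{\infty\}$. Fixing is clear since $U\le B$. For sharp transitivity I would parametrise $X\setminus\{\infty\}$: under the above bijection these are the nontrivial right cosets $B\omega b$ with $b\in B$, and $B\omega b=B\omega b'$ if and only if $b(b')^{-1}\in B\cap B^\omega=H$ by saturation, i.e.\ if and only if $Hb=Hb'$. Using $B=U\rtimes H$ to identify $H\backslash B\cong U$, the distinct nontrivial cosets are exactly $\{B\omega u\mid u\in U\}$. The right action of $U$ sends $B\omega u$ to $B\omega(uu')$, which is precisely the right-regular action of $U$ on itself, hence sharply transitive. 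Transporting along the transitive $G$-action—conjugating $\infty$ to an arbitrary $x=U^g$ by $g$, which intertwines the $U$-action with the $U^g=V_x$-action—then yields axiom~(1) for every $x\in X$, completing the verification.

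I expect the main obstacle to be this parametrisation step: one must combine the Bruhat decomposition with the saturation hypothesis $H=B\cap B^\omega$ and the splitting $B=U\rtimes H$ to pin down the coset identifications and recognise the resulting $U$-action as the regular one. A secondary subtlety is verifying $U^\omega\ne U$ (equivalently $\infty\ne 0$), which is exactly where axiom~(iii) together with the normality of $H$ in $N$ are needed.
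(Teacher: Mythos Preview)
Your argument is correct and self-contained. The paper itself does not prove this lemma: it simply cites \cite[Proposition~2.1.3]{course} (the De~Medts--Segev course on Moufang sets). So there is no ``paper's proof'' to compare against beyond that reference; what you have written is essentially the standard verification that the cited proposition carries out.

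A few small remarks on your write-up. Your identification $\N_G(U)=B$ is clean, and your deduction of $U^\omega\neq U$ from $\omega B\omega\neq B$ together with $H^\omega=H$ is exactly the right use of axioms~(ii)--(iv). In the parametrisation step, the equivalence $B\omega b=B\omega b'\iff b(b')^{-1}\in B\cap B^\omega$ is correct (you are using $B^\omega=\omega^{-1}B\omega$, which follows from $\omega^2\in H\le B$), and saturation then pins this intersection down as $H$; combined with $B=U\rtimes H$ you obtain the bijection $U\xrightarrow{\sim}X\setminus\{\infty\}$, $u\mapsto B\omega u$, on which $U$ acts by right translation. Note that this simultaneously shows $\rho|_U$ is injective, so $V_\infty=\rho(U)$ really is a subgroup of $\Sym(X)$ isomorphic to $U$ acting regularly on $X\setminus\{\infty\}$. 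Your handling of axiom~(2) via the tautology $V_{x^\varphi}=x^\varphi=(V_x)^\varphi$ and the fact that every $\varphi\in G^\dagger$ lies in $\rho(G)$ is fine; the possible non-uniqueness of a lift of $\varphi$ does not matter for the conclusion.
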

\begin{proof}
For a proof, see \cite[Proposition 2.1.3.]{course}.
\end{proof}

Using the alternative definition of a Moufang set, we find that a representation of the Moufang set corresponding to this BN-pair is $\mouf(U,\omega)$.

\subsection{Algebraic Moufang sets of type \boldmath $\F_4$} \label{f4}

We will now give an easy description of the Moufang sets arising from algebraic groups of type $\F_4$ using the method we have explained
in the previous section.

\begin{theorem}\label{thm:algF4}
	Let $k$ be an arbitrary field.
	Every Moufang set of type $\F_4$ over $k$ is determined by an octonion division algebra $\oct/k$.
	More precisely, if $\oct$ is such an octonion division algebra, then we define
	\[ U := \{ (a,b) \in \oct \times \oct \mid \N(a) + \tr(b) = 0 \} , \]
	where $\N$ and $\tr$ are the standard norm and trace maps from $\oct$ to $k$.
	We define the (non-abelian) group operation $+$ on $U$ by setting
	\[ (a,b) + (c,d) := (a+c, b+d - \overline{c}a) \]
	for all $(a,b), (c,d) \in U$.
	Finally, we define a permutation $\tau \in \Sym(U^*)$ by setting
	\[ (a,b)^\tau := \bigl( -ab^{-1}, b^{-1} \bigr) \]
	for all $(a,b) \in U$.
	Then the corresponding Moufang set of type $\F_4$ is equal to $\mouf(U, \tau)$.
\end{theorem}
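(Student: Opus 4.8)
The plan is to deduce this from the explicit description of the Moufang sets $\mouf(\G,k)$ attached to absolutely simple algebraic groups $\G$ of type $\F_4$ and $k$-rank one, which is already available in \cite{DVM}. Two assertions have to be established: the \emph{classification} statement that such groups correspond to octonion division algebras $\oct/k$, and the \emph{explicit} statement that for a given $\oct$ the associated Moufang set is precisely $\mouf(U,\tau)$ with $U$ and $\tau$ as in the theorem. For the first I would simply invoke the Tits classification of $k$-forms of $\F_4$ (the isotropic non-split index has relative rank one and is unique) together with the analysis of the corresponding anisotropic kernel and root datum carried out in \cite{DVM}, which singles out an octonion division algebra $\oct/k$ as the governing invariant. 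So the bulk of the work is to match the description of \cite{DVM} with the data $(U,\tau)$.

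Before doing so I would check that the data is internally consistent. That $(U,+)$ is a group is a direct computation: the identity is $(0,0)$, the inverse of $(a,b)$ is $(-a,-b-\N(a))$, and closure and associativity follow from the bilinearity of $(c,a)\mapsto\overline{c}a$ together with the defining relation $\N(a)+\tr(b)=0$ and the identity $\overline{a}a=\N(a)$. For $\tau$, the crucial preliminary observation is that every nonzero element of $U$ has invertible second coordinate: if $b=0$ then $\N(a)=0$, and anisotropy of the norm of the division algebra $\oct$ forces $a=0$. Hence $b^{-1}$ is defined on all of $U^*$, and from $\N(ab^{-1})=\N(a)\N(b)^{-1}$ and $\tr(b^{-1})=\tr(b)\N(b)^{-1}$ one gets $\N(-ab^{-1})+\tr(b^{-1})=(\N(a)+\tr(b))\N(b)^{-1}=0$, so $\tau$ maps $U^*$ into itself; the octonion identity $(ab^{-1})b=a$ then gives $\tau^2=\id$.

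The heart of the matter is to locate, inside the little projective group of $\mouf(\G,k)$, a root group that can be identified with $(U,+)$, and a representative of the nontrivial Weyl element that can be identified with $\tau$. Concretely, I would transport the description of \cite{DVM} into the reconstruction framework of Lemma~\ref{Utau}: fix two opposite rank-one parabolics as $0$ and $\infty$, identify the root group $U_\infty$ with $U$ via the two-layer (Heisenberg-type) coordinates on the unipotent radical — the first coordinate recording the abelianized quotient and the second the centre, with the term $-\overline{c}a$ in the group law reflecting the commutator — and then read off the Weyl/Hua inversion as $\tau$. Once this dictionary is established, Lemma~\ref{Utau} yields $\mouf=\mouf(U,\tau)$ immediately, and the Moufang-set axioms hold for free because $\mouf(\G,k)$ is already known to be a Moufang set.

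I expect this last matching step to be the main obstacle: extracting from \cite{DVM} the precise group law and the precise formula for the Weyl-element action in octonion coordinates, keeping careful track of where the conjugation bar is placed, of the signs, and of the normalization of the chosen generators, so that everything is compatible with the stated $\tau$. All of this reduces to the standard octonion identities — norm multiplicativity, $\overline{a}a=\N(a)$, alternativity and the Moufang identities — but making the conventions line up exactly is the delicate part; no genuinely new argument is required once the identification is pinned down.
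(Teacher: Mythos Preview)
Your approach is essentially the same as the paper's: the paper's proof of this theorem consists of the single line ``See \cite{DVM}'', so the statement is quoted directly from that reference rather than reproved here. Your additional consistency checks and discussion of how to match coordinates are reasonable commentary, but no matching step is actually needed---the formulas for $U$, $+$, and $\tau$ are taken verbatim from \cite{DVM}.
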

\begin{proof}
	See \cite{DVM}.
\end{proof}

The goal of this paper is to extend this result to so-called mixed groups of type $\F_4$ (which we will define in the next section).
We will see, however, that we will not only have to use very different methods, but that also the resulting description will
not simply be of the same form as the nice description that we have in Theorem~\ref{thm:algF4};
see Theorem~\ref{th:main} below.

\section{Mixed groups}\label{se:mixed}

In this section, we recall some basic facts about mixed groups.
Our main reference is \cite[Section 10.3]{Tits74}.

Let $G$ be an adjoint split simple algebraic group of type $X$ defined over a field $k$ of characteristic $p$,
where either $X = \B_n, \mathsf{C}_n, \F_4$ and $p=2$, or $X = \mathsf{G}_2$ and $p=3$.
Assume moreover that $\ell$ is a field such that $\ell^p \leq k \leq \ell$.

Let $T$ be a maximal $k$-split torus, let $N = N_G(T)$ be the normalizer of $T$ in~$G$,
let $B$ be a Borel subgroup of $G$ containing $T$,
and let $\Phi$ be a root system of type $X$ corresponding to the maximal torus $T$.
Since $X$ is not simply laced, $\Phi$ consists of long and short roots, and we write
$\Phi = \Phi_\ell \cup \Phi_s$, where $\Phi_\ell$ and $\Phi_s$ denote the sets of long and short roots, respectively.
For each root $r \in \Phi$, we have a corresponding root group $U_r$, i.e.\@ a one-dimensional $k$-unipotent subgroup
of $G$ acted upon by $T$.
In the algebraic group $G$, all root groups are isomorphic to the additive group $\mathbb{G}_a$.
For each $r \in \Phi$, we choose an isomorphism $u_r$ from $\mathbb{G}_a$ to $U_r$.
We also define $\Phi^+$ to be the set of positive roots of $\Phi$, i.e.\@ the roots $r \in \Phi$ such that $U_r \subseteq B$;
correspondingly, we write $\Phi_\ell^+ := \Phi_\ell \cap \Phi^+$ and $\Phi_s^+ := \Phi_s \cap \Phi^+$.

Now let \\[-4ex]
\begin{align*}
	T(k,\ell) &:= \biggl\{ t \in T \;\biggl\lvert\,
	\begin{aligned}
		&r(t) \in k &&\text{for all } r \in \Phi_l \ \text{ and } \\
		&r(t) \in \ell &&\text{for all } r \in \Phi_s
	\end{aligned}
	\biggr\} \,, \\[.3ex]
	N(k,\ell) &:= N(k) \; T(k,\ell) \,, \\
	B(k,\ell) &:= \langle T(k,\ell) \cup \{ U_r(k) \mid r \in \Phi_l^+ \} \cup \{ U_r(\ell) \mid r \in \Phi_s^+ \} \rangle \,,
\intertext{and finally}
	G(k,\ell) &:= \langle T(k,\ell) \cup \{ U_r(k) \mid r \in \Phi_l \} \cup \{ U_r(\ell) \mid r \in \Phi_s \} \rangle \,.
\end{align*}
The group $G(k,\ell)$ is the {\em mixed group} of type $X$ corresponding to the pair of fields $(k,\ell)$, and it is also denoted
by $X(k,\ell)$, particularly when $X$ is specified. One can show that the pair $(B(k,\ell), N(k,\ell))$ forms a BN-pair of $G(k,\ell)$.

\begin{example}[{\cite[p.\@ 204]{Tits74}}]\label{ex:mixedq}
	Let $(k,\ell)$ be a pair of fields of characteristic $2$ with $\ell^2 \leq k \leq \ell$, and
	let $q$ be the ``mixed quadratic form'' from $k^{2n} \times \ell$ to $k$ given by
	\[ q(x_0, x_1, \dots, x_{2n-2}, x_{2n-1}, x_{2n}) = x_0 x_1 + \dots + x_{2n-2} x_{2n-1} + x_{2n}^2 \,. \]
	Then the mixed group $B_n(k,\ell)$ is isomorphic to the group $\mathsf{PGO}(q)$, i.e.\@ the quotient of the group
	of all invertible similitudes of $q$ by the subgroup $k^\times$.
	This group is also isomorphic to the mixed group $\mathsf{C}_n(\ell^2, k)$.

	When we are considering the corresponding building, i.e.\@ the ``mixed quadric'' consisting of the isotropic vectors of $q$,
	it will often be convenient to drop the last coordinate $x_{2n}$, since it is uniquely determined
	from the other coordinates by the equation $q(x_0,\dots,x_{2n-1},x_{2n}) = 0$.
	Thus, the mixed quadric will then consist of points in $\PG(2n-1,k)$ with (projective) coordinates
	$(X_0,\dots,X_{2n-1})$ satisfying the condition
	\begin{equation}
		X_0 X_1 + \dots + X_{2n-2} X_{2n-1} \in \ell^2 \,,
	\end{equation}
	and the higher-dimensional objects of the building are now simply
	the subspaces of the underlying projective space lying on this mixed quadric.
\end{example}

For algebraic groups, it is well known that the root groups satisfy certain commutator relations depending on the root system.
More precisely, it is possible to renormalize the parametrizations $u_r$ in such a way that there are
constants $c_{\lambda,r,\mu,s} \in \{ \pm 1, \pm 2, \pm 3 \}$, called the {\em structure constants}, such that
\begin{equation}\label{eq:comm}
	[u_r(x), u_s(y)] = \prod_{\substack{\lambda,\mu \in \Z_{>0} \\ \lambda r + \mu s \in \Phi}} u_{\lambda r + \mu s}\left(c_{\lambda,r,\mu,s} \cdot x^\lambda y^\mu \right)
\end{equation}
for all $r,s \in \Phi$ and all $x,y \in k$;
see, for example, \cite[Propositions 9.2.5 and 9.5.3]{Springer}, or \cite[Theorem 5.2.2]{Carter} for the analogous statement for Chevalley groups.

This goes through for mixed groups without any change:
we get the same commutator relations \eqref{eq:comm}, but this time
for all $r,s \in \Phi$ and all $x,y \in k$ or $\ell$ depending on whether the corresponding roots $r$ and $s$
are long or short, respectively.
Observe that the condition $\ell^p \leq k \leq \ell$
is exactly the condition which is needed for these commutator relations to make sense,
i.e.\@ the elements $x^\lambda y^\mu$ belong to $k$ whenever the root $\lambda r + \mu s$ is a long root.

In the case $p = \ch(k) = 2$, which will be the only case we will be dealing with in this paper,
the constants $c_{\lambda, r, \mu, s}$ are all equal to $0$ or $1$, so equation \eqref{eq:comm} simplifies further.
In the case $p = 2$ and $X = \F_4$, which is the case that we are interested in in this paper,
we can summarize the commutator relations as follows; see, for instance, \cite[(2.2)--(2.5)]{Ree}:
\begin{align}\label{eq:comm2}
\begin{split}
&\begin{alignedat}{2}
	[u_r(x), u_s(y)] &= 1
		&& \text{ if } r,s \in \Phi \text{ but } r+s \not\in \Phi \,, \\
	[u_r(x), u_s(y)] &= u_{r + s}(xy)
		&& \text{ if } r,s \in \Phi_s \text{ and } r+s \in \Phi_s \,, \\
	[u_r(x), u_s(y)] &= 1
		&& \text{ if } r,s \in \Phi_s \text{ and } r+s \in \Phi_\ell \,, \\
	[u_r(x), u_s(y)] &= u_{r + s}(xy)
		&& \text{ if } r,s \in \Phi_\ell \text{ and } r+s \in \Phi_\ell \,, \\
	[u_r(x), u_s(y)] &= u_{r + s}(xy) \, u_{2r + s}(x^2 y)
\end{alignedat} \\
&\hspace*{18ex} \text{ if } r \in \Phi_s, s \in \Phi_\ell \text{ and } r+s \in \Phi_s, 2r + s \in \Phi_\ell \,,
\end{split}
\end{align}
for all $x,y \in k$ or $\ell$ depending on whether the corresponding roots $r$ and $s$
are long or short, respectively.
Note that this list is exhaustive:
if $r$ and $s$ are long roots with $r+s \in \Phi$, then $r+s \in \Phi_\ell$;
and if $r$ is a short root and $s$ a long root with $r+s \in \Phi$, then $2r+s \in \Phi$ as well and $r+s$ is short and $2r+s$ is long.
See \cite[(1.2) and (1.3)]{Ree}.


\section{Mixed Chevalley groups of type $\F_4$}\label{se:mixedgroups}

Let $k$ and $\ell$ be fields of characteristic $2$ such that $\ell^2\leq k \leq \ell$.
Assume that $\delta \in k$ is such that the polynomial $x^2 + x + \delta$ is irreducible over $k$.
Let $\gamma$ be a solution of $x^2 + x = \delta$, and let $K = k(\gamma)$ and $L = \ell(\gamma)$.
Then $L^2 \leq K = \langle k, L^2 \rangle \leq L$,
and $K$ and $L$ are separable quadratic extensions of $k$ and $\ell$, respectively.
We denote the standard involution on both $L$ and $K$ corresponding to $\gamma$ by $x \mapsto \overline{x}$.

Let $\Phi$ be a root system of type $\F_4$ with fundamental system $\Pi:=\{\alpha_1,\alpha_2,\alpha_3,\alpha_4\}$. We can represent the fundamental roots with respect to an orthonormal basis $\{e_1,e_2,e_3,e_4\}$ of $\R^4$ as $\alpha_1=\frac{1}{2}(-e_1-e_2-e_3+e_4)$, $\alpha_2=e_3$, $\alpha_3=e_2-e_3$, $\alpha_4= e_1-e_2$ and the full system of roots is given by
\[
\Phi = \begin{cases}
	\pm e_i\pm e_j \text{ for } 1\leq i< j\leq 4 , \\
	\pm e_i \text{ for } 1\leq i\leq 4 , \\
	\frac{1}{2}(\pm e_1\pm e_2\pm e_3 \pm e_4) .
\end{cases}
\]

We define the mixed Chevalley group $\F_4(K,L)$ of type $\F_4$ as the mixed group that can be obtained from the ordinary Chevalley group $\F_4(L)$ of type $\F_4$.
For this, we remark (using the definitions introduced in section \ref{Chevalley}) that $H$ is a maximal $K$-split torus, $N=N_{\F_4(L)}(H)$ is the normalizer of $H$ in $\F_4(L)$ and $B$ is a Borel subgroup of $\F_4(L)$. Then
\[ \F_4(K,L)=\Big\langle \{u_r(s)\mid r\in\Phi_\ell, s\in K\} \cup \{u_r(t)\mid r\in\Phi_s,t\in L\} \cup T(K,L)\Big\rangle \]
is the mixed group of type $\F_4$ corresponding to the pair of fields $(K,L)$ of $\F_4(L)$.
Using the same procedure, we can construct mixed Chevalley groups of type $\B_n$, $\mathsf{C}_n$ and $\mathsf{G}_2$.
In general, we denote a mixed Chevalley group by $X(K,L)$.

Theorem~\ref{T(K,L)} below shows that we can omit the subgroup $T(K,L)$ in the generating set for $X(K,L)$ if $X(K,L)$ is of type $\mathsf{G}_2$, $\F_4$ or $\B_n$ with $n$ odd.
We will need the following observation.
\begin{lemma}\label{le:long}
	Let $\Phi$ be a root system of type $\B_n$, $\mathsf{C}_n$, $\F_4$ or $\mathsf{G}_2$, and let $p=3$ in the case of $\mathsf{G}_2$ and $p=2$ otherwise.
	Let $\Pi = \{ \alpha_1,\dots,\alpha_n \}$ be a set of fundamental roots for $\Phi$, and let $\Pi_s$ be the subset of $\Pi$ of short fundamental roots.
	If $r \in \Phi$ is a long root, and $r = \sum_{i=1}^n n_i \alpha_i$,
	then each coefficient $n_i$ corresponding to a short fundamental root $\alpha_i \in \Pi_s$ is divisible by $p$.
\end{lemma}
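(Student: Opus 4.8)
The plan is to recast the statement as a fact about the quotient of the root lattice by the sublattice spanned by the long roots. Write $\Lambda := \Z\Phi$ for the root lattice and $\Lambda_\ell := \Z\Phi_\ell$ for the subgroup of $\Lambda$ generated by the long roots. Since a long \emph{simple} root is in particular a long root, we have $\alpha_i \in \Lambda_\ell$ for every long fundamental root; hence, modulo $\Lambda_\ell$, a root $r = \sum_{i=1}^n n_i \alpha_i$ reduces to $\bar r = \sum_{\alpha_i \in \Pi_s} n_i\, \bar\alpha_i$. If $r$ is long then $\bar r = 0$, so the whole statement will follow once we show that the classes $\{\bar\alpha_i \mid \alpha_i \in \Pi_s\}$ form a basis of a $\Z/p$-vector space $\Lambda/\Lambda_\ell$: the relation $\sum_{\alpha_i \in \Pi_s} n_i\, \bar\alpha_i = 0$ then forces $n_i \equiv 0 \pmod p$ for each short $\alpha_i$.

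First I would prove that $\Lambda/\Lambda_\ell$ is an elementary abelian $p$-group, i.e.\ that $p\Lambda \subseteq \Lambda_\ell$. Because $\Lambda$ is generated by $\Phi = \Phi_\ell \cup \Phi_s$ and $\Phi_\ell \subseteq \Lambda_\ell$, it suffices to check that $p\beta \in \Lambda_\ell$ for every short root $\beta$. As $\Lambda_\ell$ is stable under the Weyl group $W$ (the set $\Phi_\ell$ being $W$-invariant) and the short roots form a single $W$-orbit, this reduces to one verification per type, exhibiting $p\beta$ as an explicit $\Z$-combination of long roots: for instance in type $\F_4$ one has $2e_i = (e_i+e_j)+(e_i-e_j)$ and $e_1+e_2+e_3+e_4 = (e_1+e_2)+(e_3+e_4)$, while in type $\mathsf{G}_2$ the vector $3\beta$ is a difference of two long roots. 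Combined with the first paragraph, this shows that $\Lambda/\Lambda_\ell$ is killed by $p$ and is generated by the $|\Pi_s|$ classes $\bar\alpha_i$, so it has order at most $p^{|\Pi_s|}$.

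It then remains to prove that these classes are linearly independent, equivalently that $[\Lambda : \Lambda_\ell] = p^{|\Pi_s|}$. This is where I expect the actual work to lie, and it is the only step that genuinely depends on the type; one reads the index off the explicit root data. In type $\B_n$ one has $\Lambda = \Z^n$ and $\Lambda_\ell$ the even sublattice, of index $2 = p^1$ with $|\Pi_s| = 1$; in type $\mathsf{C}_n$, $\Lambda$ is the even sublattice of $\Z^n$ and $\Lambda_\ell = (2\Z)^n$, of index $2^{n-1} = p^{n-1}$ with $|\Pi_s| = n-1$; in type $\F_4$ one finds $[\Lambda:\Lambda_\ell] = 4 = p^2$ with $|\Pi_s| = 2$; and in type $\mathsf{G}_2$ the long roots span a sublattice of index $3 = p^1$. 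In each case the exponent equals $|\Pi_s|$, so the $|\Pi_s|$ generators of the $p$-group $\Lambda/\Lambda_\ell$ of order $p^{|\Pi_s|}$ are forced to be independent, which finishes the argument.

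I would flag this independence (equivalently, the lattice index) as the main obstacle. Each case is routine, but I do not expect a single orbit or symmetry argument to settle it uniformly: every attempt to manufacture a $\Z/p$-valued functional detecting an individual short coefficient modulo $p$ and vanishing on $\Lambda_\ell$ turns out to be logically equivalent to the lemma itself, since $\Lambda_\ell \otimes \R$ is all of $\mathcal{H}_\R$ and hence carries no nonzero real functional. If one prefers to avoid the index bookkeeping, the same conclusion can be obtained by inspecting the short fundamental-root coefficients of the long roots directly from the explicit description of $\Phi$ (finitely many up to $W$); the quotient-lattice formulation, however, makes transparent why the relevant modulus is exactly the long-to-short squared-length ratio $p$.
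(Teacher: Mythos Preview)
Your argument is correct, but it is considerably more elaborate than what the paper does. The paper's entire proof is a one-line appeal to case-by-case inspection of the explicit root data (with a reference to the tables in Carter), i.e.\ exactly the direct check you mention in your final paragraph as an alternative. Your route through the quotient $\Lambda/\Lambda_\ell$ is a genuinely different packaging: it isolates the statement as the assertion that the classes of the short simple roots form an $\mathbb{F}_p$-basis of $\Lambda/\Lambda_\ell$, and it explains conceptually why the relevant modulus is the squared-length ratio~$p$ (via $p\Lambda\subseteq\Lambda_\ell$). The trade-off is that you still end up doing a type-by-type computation of $[\Lambda:\Lambda_\ell]$, so the amount of case analysis is comparable; what you gain is a cleaner statement of what has to be checked in each case (a single lattice index rather than a list of coefficients), and a uniform reason for the divisibility once that index is known. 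The paper's approach buys brevity at the cost of insight.
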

\begin{proof}
	This can easily be checked by a case by case analysis; see, for example, \cite[section 3.6]{Carter}.
\end{proof}

\begin{theorem}\label{T(K,L)}
	Let $X(K,L)$ be a mixed Chevalley group of type $\B_n$, $\mathsf{C}_n$, $\F_4$ or $\mathsf{G}_2$,
	with $L^p \subseteq K \subsetneq L$, where $p=3$ in the case of $\mathsf{G}_2$ and $p=2$ otherwise.
	Let
	\[ T(K,L) = \bigl\{ h \in T(L) \mid r(h) \in K \text{ for all } r \in \Phi_l \bigr\} \]
	as before.  Then
	\begin{equation}\label{eq:TKL}
		T(K,L) = \Big\langle \{h_r(t)\mid r\in\Phi_l,\ t\in K^\times\} \cup \{h_r(t)\mid r\in \Phi_s,\ t\in L^\times\} \Big\rangle
	\end{equation}
	if and only if $X(K,L)$ has type $\mathsf{G}_2$, $\F_4$ or $\B_n$ with $n$ odd.
	In this case, we have, in particular,
	\[ X(K,L)=\Big\langle \{u_r(s)\mid r\in\Phi_\ell, s\in K\} \cup \{u_r(t)\mid r\in\Phi_s,t\in L\} \Big\rangle. \]
\end{theorem}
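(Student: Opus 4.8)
The plan is to carry out the whole argument inside the torus. Since we work in the adjoint case, the character group of the torus is the root lattice $Q$, so evaluating fundamental characters identifies $\operatorname{Hom}(Q,L^\times)$ with $(L^\times)^n$; under this identification the Chevalley torus $T(L) = \langle h_r(t) \mid r \in \Phi,\ t \in L^\times\rangle$ is the image of $Q^\vee \otimes L^\times$, which is all of $\operatorname{Hom}(Q,L^\times)$ precisely when the coroot lattice $Q^\vee$ and the coweight lattice $P^\vee$ coincide, i.e.\ when the fundamental group $P^\vee/Q^\vee$ is trivial. This holds for $\F_4$ and $\mathsf{G}_2$, but not for $\B_n$ or $\mathsf{C}_n$, where the index is $p=2$; this distinction is exactly what will drive the dichotomy. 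I would first record the action $s(h_r(t)) = t^{\langle s, r^\vee\rangle}$ of each generator on a character $s$.

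Next I would pin down $T(K,L)$ in these coordinates. Using Lemma~\ref{le:long} together with $L^p \subseteq K$, one sees that $h \in T(K,L)$ if and only if $\alpha(h) \in K^\times$ for every long fundamental root $\alpha \in \Pi \setminus \Pi_s$: in any long root the coefficients on the short simple roots are divisible by $p$, so those contributions land in $L^p \subseteq K$ automatically. The same divisibility gives $\langle s, r^\vee\rangle \equiv 0 \pmod p$ whenever $s$ is long and $r$ is short, which yields the easy inclusion $S \subseteq T(K,L)$, where $S$ denotes the subgroup on the right-hand side of \eqref{eq:TKL}. One then checks that both $S$ and $T(K,L)$ contain $(L^{\times p})^n$, so that the desired equality $S = T(K,L)$ reduces to the equality of their images $\overline S$ and $\overline{T(K,L)}$ in the $\mathbb{F}_p$-vector space $(L^\times/L^{\times p})^n$ --- a problem in linear algebra over $\mathbb{F}_p$.

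The heart of the matter is this comparison, which I would do type by type. Writing $\bar K$ for the image of $K^\times$, the subspace $\overline S$ is spanned by the long-root vectors and is therefore constrained both to have entries in $\bar K$ and to satisfy a coordinate-balancing (sum) condition, whereas $\overline{T(K,L)}$ additionally contains the diagonal class of the common $K^\times$-coset $s \in L^\times$ shared by all coordinates. Equality holds exactly when this diagonal class is forced into $\bar K$. For $\F_4$ and $\mathsf{G}_2$ the triviality of $P^\vee/Q^\vee$ removes the balancing defect and equality follows. For $\B_n$, which has a single short simple root, the diagonal defect is measured by $s^n$ modulo $L^{\times p}$; since $s^n \equiv s$ when $n$ is odd and $s^n \equiv 1$ when $n$ is even, it is absorbed into $\bar K$ if and only if $n$ is odd. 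For $\mathsf{C}_n$ the obstruction is permanent: there $T(K,L)$ turns out to be the full torus while $S = \{(t_i) \mid \prod_i t_i \in K^\times\}$, so that $T(K,L)/S \cong L^\times/K^\times \neq 1$.

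For the ``only if'' direction I would then simply exhibit explicit witnesses in $T(K,L) \setminus S$: for $\mathsf{C}_n$ any element with $\prod_i t_i \in L^\times \setminus K^\times$, and for $\B_n$ with $n$ even the diagonal element all of whose coordinates equal some $s \in L^\times \setminus K^\times L^{\times 2}$. The final displayed identity is then immediate: once $T(K,L) = S$, each generator $h_r(t) = n_r(t)\,n_r(-1)$ with $n_r(t) = u_r(t)\,u_{-r}(-t^{-1})\,u_r(t)$ is a word in the root elements $u_{\pm r}$ whose arguments lie in $K$ for long $r$ and in $L$ for short $r$, so $T(K,L)$ is redundant in the generating set of $X(K,L)$ and may be dropped. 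I expect the main obstacle to be the type-by-type linear algebra of the third step --- in particular treating $\F_4$ cleanly, where the characters involve the half-sum $\tfrac12(e_1+e_2+e_3+e_4)$ and the reduction modulo $p$-th powers is more delicate, and isolating the precise parity mechanism for $\B_n$.
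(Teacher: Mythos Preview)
Your overall strategy --- reduce to a question about the torus, use Lemma~\ref{le:long} to cut down to the long simple roots, and then do a type-by-type analysis --- is the same as the paper's.  But there is a genuine gap in your identification of the mechanism.

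You attribute the positive cases $\F_4$ and $\mathsf{G}_2$ to the triviality of $P^\vee/Q^\vee$, saying this ``removes the balancing defect and equality follows.''  That is not a proof.  Triviality of the fundamental group only tells you that the parametrisation $h=\prod_i h_{\alpha_i}(\lambda_i)$ is a bijection onto the character torus; it does \emph{not} by itself imply that the condition ``$\alpha_j(h)\in K$ for every long fundamental $\alpha_j$'' forces $\lambda_i\in K$ for the long $\alpha_i$.  That implication is exactly what has to be checked, and it depends on the Cartan submatrix on $\Pi_l$, not on $P^\vee/Q^\vee$.  Concretely, since the short contributions land in $L^p\subseteq K$, the question becomes: does the Cartan matrix of the $\mathsf{A}_m$ subdiagram on $\Pi_l$ act injectively on $(L^\times/K^\times)^m$ (an $\mathbb{F}_p$-vector space)?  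Equivalently, is $\det(\mathsf{A}_m)=m+1$ invertible mod $p$?  For $\F_4$ one has $m=2$, $\det=3$, $p=2$; for $\mathsf{G}_2$, $m=1$, $\det=2$, $p=3$; for $\B_n$, $m=n-1$, $\det=n$, $p=2$; for $\mathsf{C}_n$, $m=1$, $\det=2$, $p=2$.  This yields precisely the stated dichotomy, and it is what the paper verifies by direct computation of the products $\prod_i \alpha_j(\alpha_i^\vee(\lambda_i))$.

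Your $\B_n$ heuristic (``the diagonal defect is measured by $s^n$'') gestures at this parity phenomenon but does not establish it; the ``coordinate-balancing'' language conflates the constraint coming from $P^\vee/Q^\vee$ (which restricts $T(L)$ itself, hence both $S$ and $T(K,L)$ equally) with the actual obstruction to $S=T(K,L)$, which lives entirely in the $\mathsf{A}_m$ block.  Once you replace the fundamental-group argument by the $\mathsf{A}_m$-Cartan-matrix computation, your outline becomes the paper's proof.
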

\begin{proof}
Let $H=\langle h_r(\lambda)\mid r\in\Phi, \lambda\in L^\times \rangle$, i.e.\@ $H$ is the torus $T(L)$ of the Chevalley group $X(L)$
(as defined in section~\ref{Chevalley}).
Let $\Pi=\{\alpha_1,\dots, \alpha_n\}$ be the set of fundamental roots of $\Phi$,
and let $\Pi_s$ and $\Pi_l$ be the subsets of $\Pi$ of short and long fundamental roots, respectively.
We claim that
\begin{equation}\label{eq:h}
	H = T(L) = \prod_{i=1}^n h_{\alpha_i}(L^\times) .
\end{equation}
Indeed, if $r \in \Phi$ is any root, then $h_r \colon L^\times \to T(L)$ is precisely the coroot $r^\vee$ of $r$,
and hence we can write $r^\vee$ as an integral linear combination $r^\vee = \pm \sum_{i=1}^n n_i \alpha_i^\vee$ of
the coroots $\alpha_i^\vee$ corresponding to the roots $\alpha_i$.
Hence $h_r(t) = \prod_{i=1}^n h_{\alpha_i}(t^{\pm n_i})$ for all $t \in L^\times$,
and the claim~\eqref{eq:h} follows.

Notice that by the same argument, the equality~\eqref{eq:TKL} is equivalent with
\[ T(K,L) = \prod_{r \in \Pi_l} h_r(K^\times) \cdot \prod_{r \in \Pi_s} h_r(L^\times) , \]
which is, in view of \eqref{eq:h}, also equivalent with the implication
\begin{equation}\label{eq:TKL'}
	\prod_{r \in \Pi_l} h_r(\lambda_r) \in T(K,L) \implies \lambda_r \in K^\times \text{ for all } r \in \Pi_l ;
\end{equation}
so our goal is to show that the implication \eqref{eq:TKL'} holds if and only if $X(K,L)$ has type $\mathsf{G}_2$, $\F_4$ or $\B_n$ with $n$ odd.

Next, we claim that
\begin{equation}\label{eq:T}
	T(K,L) = \{ h \in T(L) \mid r(h) \in K \text{ for all } r \in \Pi_l \} .
\end{equation}
Indeed, assume that $h \in T(L)$ satisfies the condition that $r(h) \in K$ for all $r \in \Pi_l$,
and let $r \in \Phi_l$ be arbitrary.
Write $r = \sum_{i=1}^n n_i \alpha_i$;
hence $r(h) = \prod_{i=1}^n \alpha_i(h)^{n_i}$.
By Lemma~\ref{le:long}, each coefficient $n_i$ corresponding to a short fundamental root $\alpha_i \in \Pi_s$ is divisible by $p$.
If $\alpha_i$ is a long fundamental root, then $\alpha_i(h) \in K$ by assumption;
if $\alpha_i$ is a short fundamental root, then $\alpha_i(h)^{n_i} \in L^{n_i} \subseteq L^p \subseteq K$,
and we conclude that $r(h) \in K$, proving claim~\eqref{eq:T}.

Notice that for each of the types $\B_n$, $\mathsf{C}_n$, $\F_4$ or $\mathsf{G}_2$,
the subset $\Pi_l$ of long fundamental roots corresponds to a subdiagram of the Dynkin diagram of type $\mathsf{A}_m$, with $m$ equal to $n-1, 1, 2$ or $1$, respectively.
We will write $\Pi_l = \{ \alpha_1, \dots, \alpha_m \}$ accordingly, where we number the fundamental roots in the canonical way.
Hence we will rewrite an element $h = \prod_{r \in \Pi_l} h_r(\lambda_r)$ as $h = \prod_{i=1}^m \alpha_i^\vee(\lambda_i)$,
and by \eqref{eq:T}, such an $h$ belongs to $T(K,L)$ if and only if
\begin{equation}\label{eq:aa}
	\prod_{i=1}^m \alpha_j \bigl( \alpha_i^\vee(\lambda_i) \bigr) \in K \text{ for all } j \in \{ 1,\dots,m \} .
\end{equation}

We now do a case by case analysis.
\begin{compactitem}
    \item
	If $X(K,L)$ is of type $\mathsf{C}_n$, then $m=1$, and condition \eqref{eq:aa} says that $\alpha_1(\alpha_1^\vee(\lambda_1)) = \lambda_1^2 \in K$.
	This is satisfied for any element $\lambda_1 \in L$, so since $K \neq L$, the implication~\eqref{eq:TKL'} is false.
    \item
	If $X(K,L)$ is of type $\mathsf{G}_2$, then $m=1$, and condition \eqref{eq:aa} says that $\alpha_1(\alpha_1^\vee(\lambda_1)) = \lambda_1^2 \in K$.
	Since $L^3 \subseteq K$, this implies $\lambda_1 = \lambda_1^{-2} \lambda_1^3 \in K$, and hence the implication~\eqref{eq:TKL'} is true.
    \item
	Assume that $X(K,L)$ is of type $\mathsf{B}_3$ or of type $\mathsf{F}_4$.
	Then $m=2$, and condition \eqref{eq:aa} says that $\alpha_1(\alpha_1^\vee(\lambda_1)) \alpha_1(\alpha_2^\vee(\lambda_2)) = \lambda_1^2 \lambda_2^{-1} \in K$,
	and $\alpha_2(\alpha_1^\vee(\lambda_1)) \alpha_2(\alpha_2^\vee(\lambda_2)) = \lambda_1^{-1} \lambda_2^2 \in K$.
	Since $L^2 \subseteq K$, this is equivalent with $\lambda_1 \in K$ and $\lambda_2 \in K$, and hence the implication~\eqref{eq:TKL'} is true.
    \item
	Assume finally that $X(K,L)$ is of type $\mathsf{B}_n$ with $n \geq 4$.
	Then $m=n-1$, and condition \eqref{eq:aa} says that
	\[ \lambda_1^2 \lambda_2^{-1} \in K, \quad \lambda_{i-1}^{-1} \lambda_i^2 \lambda_{i+1}^{-1} \in K \text{ for } i \in \{ 2,\dots,m-1\},
		\quad \lambda_{m-1}^{-1} \lambda_m^2 \in K . \]
	Since $L^2 \subseteq K$, this is equivalent with
	\begin{gather*}
		\lambda_2 \in K, \quad \lambda_{m-1} \in K, \text{ and } \\
		\lambda_{i-1} \in K \iff \lambda_{i+1} \in K \ \text{ for } i \in \{ 2,\dots,m-1\} .
	\end{gather*}
	If $n$ is odd, then also $m-1$ is odd, and we can repeatedly apply the last equivalence to deduce that 
	$\lambda_2, \lambda_4, \dots, \lambda_{n-1} \in K$ and $\lambda_{m-1}, \lambda_{m-3}, \dots, \lambda_1 \in K$;
	hence the implication~\eqref{eq:TKL'} is true in this case.
	If $n$ is even, however, then condition~\eqref{eq:aa} is equivalent to the fact that $\lambda_i \in K$ for all even values of $i$,
	without any conditions on the other $\lambda_i$.
	Since $K \neq L$, it follows that the implication~\eqref{eq:TKL'} is false in this case.
    \qedhere
\end{compactitem}
\end{proof}

\begin{remark}\label{rem:referee}
The original version of this paper only contained a proof for the positive statement in the case of groups of type $\F_4$ and $\B_n$, $n$ odd,
and our proof was more involved.
The referee pointed out how we could simplify the proof, and simultaneously get a complete answer for all possible mixed Chevalley groups.
We thank him for sharing his insight with us.
\end{remark}

The previous lemma will allow us to transfer known facts about BN-pairs of (ordinary) Chevalley groups to BN-pairs of mixed Chevalley groups.
Indeed, when $X(K,L)$ is a mixed group, the subgroups
\begin{align*}
	N(K,L) &:= N(K) \; T(K,L) \quad \text{and} \\
	B(K,L) &:= \bigl\langle T(K,L) \cup \{ U_r(K) \mid r \in \Phi_l^+ \} \cup \{ U_r(L) \mid r \in \Phi_s^+ \} \bigr\rangle
\end{align*}
form a BN-pair for $X(K,L)$.
Using Theorem~\ref{T(K,L)}, we actually get
\[ B(K,L) = B(L)\cap X(K,L) \quad \text{and} \quad N(K,L) = N(L) \cap X(K,L) \]
if $X(K,L)$ is of the appropriate type, where $(B(L),N(L))$ is the natural BN-pair of $X(L)$.
This implies (using the general properties of a BN-pair) that
\[ X(K,L)=B(K,L) \; N(K,L) \; B(K,L) \]
and that all parabolic subgroups containing $B(K,L)$ are of the form
\begin{equation}\label{eq:PJ(K,L)}
	P_J(K,L) := B(K,L) \; N_J(K,L) \; B(K,L)=P_J\cap X(K,L).
\end{equation}
Notice that $N(K,L)/T(K,L)$ is also isomorphic to the Weyl group of $X(L)$.
So $N_J(K,L)$ is the preimage of $W_J$ under the canonical epimorphism from $N(K,L)$ to $W$.

We end this section with a unique decomposition lemma for mixed Chevalley groups.
\begin{lemma}\label{mixed}
Let $X(K,L)$ be a mixed Chevalley group of type $\F_4$ or of type $\B_n$ with $n$ odd. If $g\in X(K,L)$ is such that
\begin{equation}\label{eq:PgP}
	P_J(K,L)gP_J(K,L)=P_J(K,L)nP_J(K,L)
\end{equation}
with $n\in N(K,L)$ such that $nT(K,L)=w\in \operatorname{Stab}(\Phi_J)$, then $g$ has a unique decomposition $g=u ln u'$ with $u \in U_J(K,L):=U_J\cap X(K,L)$, $l\in L_J(K,L):= L_J\cap X(K,L)$ and $u'\in {U}_{w,J}^{-}$, where
\[ U_{w,J}^{-}:=\left\langle U_r\mid r\in \Phi^{+}\setminus \Phi_{J}, w(r)\in\Phi^{-}\right\rangle , \]
with
\[ U_r :=
\begin{cases}
	\{ u_r(s) \mid s\in K \} & \text{if } r \in \Phi_\ell, \\
	\{ u_r(t) \mid t\in L \} & \text{if } r \in \Phi_s.
\end{cases} \]
\end{lemma}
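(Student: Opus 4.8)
The plan is to obtain the decomposition by intersecting the known Bruhat-type decomposition of the ordinary Chevalley group $X(L)$ with the mixed subgroup $X(K,L)$, and then to check that each factor lands in the prescribed mixed subgroup. First I would invoke the standard decomposition theory for the parabolic $P_J$ in the full Chevalley group $X(L)$: for $g \in X(L)$ lying in the double coset $P_J \, n \, P_J$ with $nT(L) = w \in \Stab(\Phi_J)$, there is a unique factorization $g = u\, l\, n\, u'$ with $u \in U_J$, $l \in L_J$, and $u' \in U_{w,J}^-$, where $U_{w,J}^- = \langle U_r \mid r \in \Phi^+ \setminus \Phi_J,\ w(r) \in \Phi^- \rangle$ is the relevant ``opposite'' unipotent piece (this is the Levi decomposition $P_J = U_J \rtimes L_J$ combined with the sharp Bruhat normal form relative to $w$). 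The existence and uniqueness over $X(L)$ is classical; see~\cite[Chapter 8]{Carter}.

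Next I would show that when $g$ actually lies in $X(K,L)$, the unique $X(L)$-factors $u, l, u'$ automatically lie in the mixed subgroups $U_J(K,L) = U_J \cap X(K,L)$, $L_J(K,L) = L_J \cap X(K,L)$, and $U_{w,J}^- \cap X(K,L)$, so that $u' \in U_{w,J}^-$ has all its $U_r$-parameters in $K$ or $L$ according as $r$ is long or short. The key input here is equation~\eqref{eq:PJ(K,L)}, namely $P_J(K,L) = P_J \cap X(K,L)$, together with the fact (from Theorem~\ref{T(K,L)}) that the mixed group is exactly the set of elements of $X(L)$ whose root-group parameters respect the $(K,L)$-pattern. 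Concretely, since $g \in X(K,L)$ and the hypothesis~\eqref{eq:PgP} places $g$ in the mixed double coset $P_J(K,L)\, n\, P_J(K,L)$, I would argue that the factorization can be carried out entirely inside $X(K,L)$: the Bruhat and Levi reductions are performed using only multiplications and the commutator relations~\eqref{eq:comm2}, all of which preserve the mixed structure because $\ell^p \le k \le \ell$ guarantees that long-root parameters produced from short-root ones stay in $K$. Hence each factor inherits membership in the mixed subgroup, and the parameters of $u'$ satisfy the stated $U_r$-condition.

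Finally, uniqueness over $X(K,L)$ is inherited for free from uniqueness over $X(L)$: since $U_J(K,L) \subseteq U_J$, $L_J(K,L) \subseteq L_J$, and $U_{w,J}^- \cap X(K,L) \subseteq U_{w,J}^-$, any two mixed decompositions of $g$ are in particular two $X(L)$-decompositions, which must coincide. Thus it remains only to verify existence within the mixed group.

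The main obstacle I anticipate is not the uniqueness (which is purely formal) but the \emph{existence} step, i.e.\ showing that the reduction steps keep all intermediate and final parameters in the correct field. The delicate point is the last commutator relation in~\eqref{eq:comm2}, where multiplying out $[u_r(x), u_s(y)] = u_{r+s}(xy)\, u_{2r+s}(x^2 y)$ feeds a short parameter $x \in L$ and a long parameter $y \in K$ into a long root $2r+s$, producing $x^2 y$; one must confirm $x^2 y \in K$, which is precisely where the hypothesis $L^2 \subseteq K$ is used. Verifying that every such parameter-producing step respects the pattern throughout the normal-form algorithm — rather than only at the end — is where the careful bookkeeping lies, and it is exactly the reason why Theorem~\ref{T(K,L)} (ensuring $T(K,L)$ is generated by the $h_r$ with the right parameters, so no torus obstruction appears) is needed to make the argument go through.
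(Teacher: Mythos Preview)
Your proposal is correct and converges on essentially the same argument as the paper, though the paper is more direct in its framing. Rather than first invoking the $X(L)$-decomposition and then descending, the paper immediately uses the hypothesis~\eqref{eq:PgP} to write $g = p_1 n p_2$ with $p_1, p_2 \in P_J(K,L)$, asserts the mixed Levi decomposition $P_J(K,L) = L_J(K,L) \cdot U_J(K,L)$ (which follows from~\eqref{eq:PJ(K,L)} and the ordinary Levi decomposition), and then pushes the $L_J(K,L)$-part of $p_2$ and the ``$w$-positive'' root factors of its $U_J(K,L)$-part across $n$, all within $X(K,L)$. This sidesteps any need to argue that the unique $X(L)$-factors descend; the factors are mixed from the outset. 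For uniqueness the paper gives the one-line computation $n u_1' (u_2')^{-1} n^{-1} = (u_1 l_1)^{-1} u_2 l_2 \in U_J^- \cap P_J = 1$, which is the same fact your ``inherited from $X(L)$'' appeal rests on. Your anticipated obstacle about commutator relations preserving the $(K,L)$-pattern is exactly the content hidden in the paper's use of the mixed Levi decomposition and the conjugation $n L_J(K,L) n^{-1} \subseteq L_J(K,L)$, so you have identified the right point, but the paper does not spell it out further.
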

\begin{proof}
From the equality \eqref{eq:PgP}, we find that $g = p_1 n p_2$ for some $p_1,p_2\in P_J\cap X(K,L)$.
As ordinary Chevalley groups have a Levi decomposition $P_J=L_J\cdot U_J$, it follows, using~\eqref{eq:PJ(K,L)}, that there is a corresponding Levi decomposition
\[ P_J(K,L)= L_J(K,L)\cdot U_J(K,L). \]
Assume $p_2 = l'u'$ for some $l'\in L_J(K,L)$ and $u'\in U_J(K,L)$, then (as $nH\in\operatorname{Stab}(\Phi_J)$), we can switch $l'$ to the left of $n$;
moreover, we can also switch the factors of $u'$ belonging to some $U_r$ with $r \in \Phi^+ \setminus \Phi_J$ and $w(r) \in \Phi^+$, to the left of $n$,
so that we are left with an element $u' \in U_{w,J}^{-}$.
We find that indeed $g=p n u'=ulnu'$ for some $u\in U_J(K,L)$, $l\in L_J(K,L)$, and $u' \in U_{w,J}^{-}$.

Suppose that $g=u_1l_1nu'_1=u_2l_2nu'_2$, then $nu'_1{u'_2}^{-1}n^{-1}=(u_1l_1)^{-1}u_2l_2\in U_J^{-}\cap P_J=1$, so uniqueness follows.
\end{proof}


\section{Construction of a split BN-pair of rank one}\label{se:BN}

In this section we construct a split saturated BN-pair out of an involution on $\F_4(K,L)$. 
For Chevalley groups there exists a general procedure to construct a BN-pair from an involution $\sigma$ satisfying certain conditions,
as carried out in \cite{Steinbach1}.

Using a similar procedure, we show that we can construct a split saturated BN-pair of rank one from a suitable involution on $\F_4(K,L)$.
From a geometric point of view, we actually have constructed, starting from a mixed building of type $\F_4$, a new type of Moufang set;
these Moufang sets will be called {\em Moufang sets of mixed type $\F_4$}.

\subsection{Construction of an involution on $\F_4(K,L)$}\label{Involution}

We follow the ideas from \cite{Steinbach1}, but in order to deal with the situation of mixed Chevalley groups,
we impose slightly adjusted conditions on the involution $\sigma$ on $\F_4(K,L)$.
More precisely, we fix a set $J\subsetneq \Pi$, and we choose $\sigma$ in such a way that 
\begin{compactenum}[(1)]
\item $\sigma$ permutes root groups and $N(K,L)$ is invariant under $\sigma$.
\item If $P$ is a parabolic subgroup of $L_J(K,L)=L_J(L)\cap \F_4(K,L)$, invariant under $\sigma$, then $P=L_J(K,L)$.\label{sigma}
\item $\langle \{U_r(K)\mid r\in\Phi_l^{-}\setminus \Phi_J\}\cup \{U_r(L)\mid r\in\Phi_s^{-}\setminus \Phi_J\}\rangle\cap \Fix(\sigma)\neq 1$.
\end{compactenum} 

In order to take care of the first condition, we consider an involution $\sigma$ of $\F_4(K,L)$ with the following action on the generators of the mixed group
(where we denote the corresponding action on the root system also by $\sigma$):
\begin{align*}
\sigma: \F_4(K,L)\to \F_4(K,L) \colon u_r(t)\mapsto u_{\sigma(r)}(c_r\,\overline{t}).
\end{align*}
In analogy with the situation in the algebraic case, we will choose the action of $\sigma$ on the root system so that
the corresponding Tits index is as follows:
\begin{equation*} 
\begin{tikzpicture}[line width=1pt, scale=1.1]
	\draw (0,0) -- (1,0);
	\draw[double distance=2pt] (1,0) -- (2,0);
	\draw (2,0) -- (3,0);
	\draw[line width=.7pt] (1.7,.3) -- (1.4,0) -- (1.7,-.3);
	\diagnode{(0,0)}
	\diagnode{(1,0)}
	\diagnode{(2,0)}
	\diagnode{(3,0)}
	\draw[below=5pt] (0,0) node {$\alpha_1$};
	\draw[below=5pt] (1,0) node {$\alpha_2$};
	\draw[below=5pt] (2,0) node {$\alpha_3$};
	\draw[below=5pt] (3,0) node {$\alpha_4$};
	\distorbit{(0,0)}
\end{tikzpicture}
\end{equation*}
Notice that this is the only admissable Tits index of relative rank one of absolute type~$F_4$, and in fact,
every linear algebraic group of absolute type~$F_4$ is either anisotropic (i.e.\@ has relative rank~$0$),
or split (i.e.\@ has relative rank~$4$), or has the above Tits index.
(In the mixed case, however, an additional Tits index of relative rank~$2$ can arise; see \cite{MVM}.)

If we now look at the $\F_4$-building corresponding to $\F_4(K,L)$, our goal is to construct the involution $\sigma$ on $\F_4(K,L)$ in such a way that the corresponding
fixbuilding has only points of the first type (these are the points corresponding to $\alpha_1$).
Therefore, we choose $J$ to be the subset $\{\alpha_2,\alpha_3,\alpha_4\}$ of $\Pi$.
In particular, the action of $\sigma$ on the root system $\Phi$ is given by the longest element ${w_0}^J$ in the Weyl group $W_J$ generated by $w_{\alpha_2}$, $w_{\alpha_3}$ and $w_{\alpha_4}$.
Then $\sigma$ is an involution fixing $e_4$ and inverting $e_1$, $e_2$ and $e_3$,
implying that the action of $\sigma$ on $\Pi$ is given by
\[
\begin{cases}
\alpha_1 \mapsto \frac{1}{2}(e_1+e_2+e_3+e_4) = \alpha_1 + 3\alpha_2 + 2\alpha_3 + \alpha_4\\
\alpha_2\mapsto -\alpha_2\\
\alpha_3\mapsto -\alpha_3\\
\alpha_4\mapsto -\alpha_4.
\end{cases}
\]
Our next step will be to determine the coefficients $c_r$ so that $\sigma$ does indeed give rise to a Moufang set.
We will first focus on the second condition for~$\sigma$;
since this condition only concerns the subgroup $L_J(K,L)$ of $\F_4(K,L)$, we will achieve this by looking at the subgroup $\B_3(K,L) \leq \F_4(K,L)$ (this is the subgroup of $\F_4(K,L)$ generated by the root groups $U_{\alpha_2}(L)$, $U_{\alpha_3}(K)$ and $U_{\alpha_4}(K)$).
Once we will have constructed an involution $\sigma$ such that the second condition is satisfied,
we will see it is not very hard to check that also the third condition for $\sigma$ holds.

In the non-mixed case, we know that the action of $\sigma$ on the $\B_3$-building has to be chosen in such a way that the group fixed under $\sigma$ is isomorphic to the projective orthogonal group of an anistropic quadratic form of dimension $7$ with trivial Hasse invariant; see \cite[Section 3.4]{Selbach} for more details.
One can show that every such a quadratic form can be obtained as the restriction to the trace zero part of an $8$-dimensional norm form of an octonion division algebra (i.e.\@ of a $3$-fold Pfister form).
In a completely similar way, we obtain that the fixed point set of the involution $\sigma$ on the mixed $\B_3$-subbuilding has to be isomorphic to $\PGO(q)$ with $q$ the trace zero part of the `mixed' norm form of an octonion division algebra.
In the next subsection, we determine explicitly what this action should be and deduce in this way the coefficients~$c_r$.

\subsection{The action on the $\B_3$-subbuilding}

As we have seen in Example~\ref{ex:mixedq},
we can identify $\B_3(K,L)$ with the group $\PGO(R)$ where $R$ is the mixed quadratic form
\begin{multline*}
R:L\oplus K^6\to K; (x_0,x_1,x_{-1},x_2,x_{-2},x_3,x_{-3}) \\ \mapsto {x_0}^2+x_1x_{-1}+x_2x_{-2}+x_3x_{-3}
\end{multline*}
with respect to a well chosen hyperbolic basis $\mathcal{C}$;
this group consists of all $(K,L)$-linear maps $\varphi$ (modulo scalars) such that $R(\varphi(v))=R(v)$ for all $v\in L\oplus K^6$.
There is a bijective correspondence between $(K,L)$-linear maps $\varphi$ and the invertible $7$ by $7$ matrices $A$
such that the first row%
\footnote{We will always use {\em left} multiplication by matrices on column spaces.}
consists of elements in $L$,
while the others consist of elements in $K$ and all the elements in the first column, except the first one, are zero.
The condition $R(\varphi(v))=R(v)$ translates into $[R]^tA[R]=A$ where $[R]$ is the matrix%
\footnote{When we talk about {\em the} matrix of a quadratic form, we mean the unique upper-triangular matrix representing this quadratic form w.r.t.\@ the given basis.}
corresponding to the quadratic form $R$. 

First, we will determine the isomorphism between $\B_3(K,L)$ and $\PGO(R)$ explicitly, because this will allow us to describe the action of $\sigma$ on the $\B_3$-subbuilding entirely in terms of matrices. 

\subsubsection{Construction of an isomorphism between $\B_3(K,L)$ and $\PGO(R)$}\label{Isomorphism}

We use the correspondence between $\B_3(L)$ and $\PGO(\tilde{R})$ (with $\tilde{R}$ the unique extension of $R$ to a quadratic form on $L^7$)
mentioned in \cite[Section 11.3]{Carter} to determine a matrix representation for the elements of $\B_3(K,L)$.
Therefore we return to the original definition of $\B_3(L)$ as the group generated by some automorphisms on $L\otimes \mathcal{L}_{\Z}$.
We know $\mathcal{L}:=\mathcal{L}_{\C}$ has a Cartan decomposition
\[ \lie = \cartan \oplus \bigoplus_{r \in \Phi_{\B_3}} \lie_r =\cartan\oplus\bigoplus\C e_r, \]
where $e_r$ runs through the list (\cite[p.\@~180]{Carter})
\begin{align*}
&E_{i,j}-E_{-j,-i}
&&E_{i,-j}-E_{j,-i}
&&2E_{i,0}-E_{0,-i}\\
-&E_{-i,-j}+E_{j,i}
&-&E_{-i,j}+E_{-j,i}
&-&2E_{-i,0}+E_{0,i}
\end{align*}
for $0<i<j\leq 3$.
The matrices $E_{i,j}$ are the $7$ by $7$ matrices with a $1$ on the $(i,j)$-th position,
with rows and columns indexed by $\{0,1,-1,2,-2,3,-3\}$.

Next, we want to find an explicit correspondence between the roots of $\Phi_{\B_3}$ and the root spaces $\C e_r$ of $\lie$.
Therefore, it suffices to find an identification between the fundamental roots of $\mathcal{H}^*$ and those of $\Phi_{\B_3}$.
According to \cite[p.\@~180]{Carter}, the elements of $\mathcal{H}$ are of the form $\diag(0,\lambda_1,-\lambda_1,\lambda_2,-\lambda_2, \lambda_3,-\lambda_3)$, $\lambda_i\in\C$.
We find a fundamental system $\{\tilde{\alpha}_2,\tilde{\alpha}_3,\tilde{\alpha}_4\}$ for $\mathcal{H}^*$ with
\begin{alignat*}{3}
\tilde{\alpha}_2 \colon &\mathcal{H}\to \C; &&\diag(0,\lambda_1,-\lambda_1,\lambda_2,-\lambda_2, \lambda_3,-\lambda_3) &&\mapsto \lambda_3 , \\
\tilde{\alpha}_3 \colon &\mathcal{H}\to \C; &&\diag(0,\lambda_1,-\lambda_1,\lambda_2,-\lambda_2, \lambda_3,-\lambda_3) &&\mapsto \lambda_2-\lambda_3 , \\
\tilde{\alpha}_4 \colon &\mathcal{H}\to \C; &&\diag(0,\lambda_1,-\lambda_1,\lambda_2,-\lambda_2, \lambda_3,-\lambda_3) &&\mapsto \lambda_1-\lambda_2 .
\end{alignat*}
With the use of equation \eqref{H}, we obtain that the elements $\tilde{\alpha}_2$, $\tilde{\alpha}_3$, $\tilde{\alpha}_4$ of $\mathcal{H}^*$ correspond to the elements $2E_{3,0}-E_{0,-3}$, $E_{2,3}-E_{-3,-2}$ and $E_{1,2}-E_{-2,-1}$ of the Chevalley basis, respectively. The obvious identification one can make between the roots of $\mathcal{H}^*$ and those of $\Phi_{\B_3}$ is
\begin{align*}
\tilde{\alpha}_2 &\ \leftrightarrow\ \alpha_2=e_3\\
\tilde{\alpha}_3 &\ \leftrightarrow\ \alpha_3=e_2-e_3\\
\tilde{\alpha}_4 &\ \leftrightarrow\ \alpha_4=e_1-e_2.
\end{align*}
We can now identify the elements $u_r(t)$ with matrices. This can be done using the epimorphism
\[G\to \lie(L): \ex(t e_r )\mapsto u_r(t),\]
with $\ex(t e_r)$ being the matrices described on \cite[p.183]{Carter} and $G$ being the group generated by all these matrices. The kernel of this map turns out to be the center of $G$.

In this way we can identify the following elements for all $i,j\in \{1,\dots, 3\}$: 
\begin{align*}
 u_{e_i-e_j}(\lambda) &\ \leftrightarrow\ \I+\lambda (E_{i,j}+E_{-j,-i})\\
 u_{e_i+e_j}{(\lambda)} &\ \leftrightarrow\ \I+\lambda (E_{i,-j}+E_{j,-i})\\
 u_{-e_i-e_j}{(\lambda)} &\ \leftrightarrow\ \I+\lambda (E_{-i,j}+E_{-j,i})\\
 u_{e_i}(\lambda) &\ \leftrightarrow\ \I + \lambda E_{0,-i} +\lambda^2\, E_{i,-i} \\
 u_{-e_i}(\lambda) &\ \leftrightarrow\ \I + \lambda E_{0,i} +\lambda^2\, E_{-i,i}.
\end{align*}
for all $\lambda \in L$, where $\I$ is the $7$ by $7$ identity matrix.

\subsubsection{The action of $\sigma$ on $\PGO(R)$}

As mentioned in the previous subsection, we wish to construct $\sigma$ in such a way that the fixed points form a group isomorphic to $\PGO(q)$  with $q$,
the trace zero part of a mixed norm form of an octonion division algebra. 
Such a quadratic form is defined over the fields $k$ and $\ell$ and is of the form
\begin{multline*}
\N := \N_L\perp \alpha \N_K \perp \beta \N_K \perp \alpha\beta \N_K \colon \\
\begin{aligned}
	L\oplus K \oplus K \oplus K &\to \ell; \\
	(y_1,y_2,y_3,y_4) &\mapsto y_1\overline{y_1}+\alpha y_2\overline{y_2}+\beta y_3\overline{y_3}+\alpha\beta y_4\overline{y_4} \,,
\end{aligned}
\end{multline*}
where $\alpha, \beta$ are constants in $k^\times$.
\begin{remark}\label{re:octmult}
	Denote by  $\N_\ell$ the extension of $\N$ to the octonion algebra $\mathcal{O}_\ell=L\oplus L\oplus L\oplus L$.
	Although the norm on $\mathcal{O}_\ell$ is uniquely determined, there is no canonical way to define the product of two octonions
	(in terms of the decomposition $\oct_\ell = L^4$),
	although all of these algebras are isomorphic.
	The most common way to define such a multiplication uses the fact that every composition algebra of dimension $d>1$ can be obtained from a $(d/2)$-dimensional subalgebra
	by the so-called Cayley--Dickson doubling process;
	see, for example, \cite[Proposition 1.5.3]{octonions}.
	For our purposes, it will be more convenient to use the following description.

	Let $x=(x_1,x_2,x_3,x_4)$ and $y=(y_1,y_2,y_3,y_4)$ be two arbitrary elements of $\oct_\ell$, then we define the product $x\cdot y$ to be equal to
	\begin{multline*}
		\bigl( x_1y_1+\alpha\overline{x_2}y_2+\beta\overline{x_3}y_3+\alpha\beta\overline{x_4}y_4, \ x_2y_1+\overline{x_1}y_2+\beta x_4\overline{y_3}+\beta\overline{x_3}y_4, \\
		x_3y_1+\overline{x_1}y_3+\alpha x_4\overline{y_2}+\alpha \overline{x_2}y_4,\ x_3y_2+x_2y_3+x_4\overline{y_1}+x_1y_4 \bigr) ,
	\end{multline*}
	and we define the conjugate $\overline{x}$ to be equal to
	\[ \overline{x} = \bigl( \overline{x_1}, x_2, x_3, x_4 \bigr) ; \]
	this makes $\mathcal{O}_\ell = L^4$ into an octonion algebra with norm $N_\ell$, and $N_\ell(x) = x \cdot \overline{x} = \overline{x} \cdot x$
	for all $x \in \mathcal{O}_\ell$.
\end{remark}

The restriction $q$ of $\N$ to the subspace of trace zero elements is then of the following form:
\begin{align*}
q: \ell\oplus K\oplus K\oplus K \to k; (y_1,y_2,y_3,y_4)\mapsto y_1^2+\alpha y_2\overline{y_2}+\beta y_3\overline{y_3}+\alpha\beta y_4\overline{y_4}.
\end{align*}
We will now show how to construct such an involution $\sigma$.

Viewing the mixed quadratic form $q$ as a form over $(K,L)$ in the obvious way (we denote this extended form by $Q := q\otimes_{k,\ell} (K,L)$),
the quadratic forms $Q$ and $R$ are isometric.
Indeed, looking at the matrix representations $[Q]_\mathcal{B}$ and $[R]_\mathcal{C}$ with respect to the standard bases $\mathcal{B}$ and $\mathcal{C}$
we see that
\begin{align*}
[Q]_\mathcal{B}=\begin{pmatrix}
1 &  & &  & & &\\
& \alpha & \alpha & & & \\
 & 0 & \alpha\delta \\
 &  && \beta & \beta\\
 & &&0 & \beta\delta\\
 & &&&& \alpha\beta & \alpha\beta\\
& &&&& 0 & \alpha\beta\delta
\end{pmatrix}\ \text{and}\
[R]_\mathcal{C}=\begin{pmatrix}
1 &  & &  & & &\\
& 0 & 1 & & & \\
 & 0 & 0 \\
 &  && 0 & 1\\
 & &&0 & 0\\
 & &&&& 0 & 1\\
& &&&& 0 & 0
\end{pmatrix}.
\end{align*}
A change of the base of $\mathcal{B}$ using the transition matrix
\begin{align*}S:=\begin{pmatrix}
1 &  & &  & & &\\
& 1 & \gamma & & & \\
 & \alpha & \alpha\overline{\gamma} \\
 &  && 1 & \gamma\\
 & &&\beta & \beta\overline{\gamma}\\
 & &&&& 1 & \gamma\\
& &&&& \alpha\beta & \alpha\beta\overline{\gamma}
\end{pmatrix}
\end{align*}
will do the job.
(Notice that the matrix $S^t [R]_\mathcal{C} S$ is not equal to the matrix $[Q]_\mathcal{B}$, but it represents the same quadratic form.)

It follows that $\PGO(Q)^{S^{-1}} = \PGO(R)$, and since $\PGO(q)$ is equal to $\{A\in\PGO(Q)\mid A=\overline{A}\}$ we obtain
\begin{align*}
\PGO(q)^{S^{-1}} &=\{SAS^{-1}\mid A\in \PGO(Q)\ \text{and}\ A=\overline{A} \}\\
&= \{B\in \PGO(R)\mid S^{-1}BS=\overline{S^{-1}BS}\}\\
&=\{B\in \PGO(R)\mid B=(S\overline{S}^{-1})\overline{B}(\overline{S}S^{-1})\}\\
&=\{B\in \PGO(R)\mid B=M^{-1}\overline{B}M\}
\end{align*}
with 
\begin{align*}
M=\overline{S}S^{-1}
=\begin{pmatrix}
1 &  & &  & & &\\
& 0 & \alpha^{-1} & & & \\
 & \alpha & 0\\
 &  && 0 & \beta^{-1}\\
 & &&\beta & 0\\
 & &&&& 0 & \alpha^{-1}\beta^{-1}\\
& &&&& \alpha\beta & 0
\end{pmatrix}.
\end{align*}

We conclude that we can describe the restriction of the involution $\sigma$ (using the isomorphism between $\PGO(R)$ and $\B_3(K,L)$) as
\begin{equation}\label{eq:sigmaB3}
	\sigma_{|_{\B_3(K,L)}}: \PGO(R)\to \PGO(R); x\mapsto M^{-1}\overline{x}M.
\end{equation}
We will from now on identify $\PGO(R)$ and $\B_3(K,L)$ without explicitly mentioning the isomorphism.

\subsubsection{Calculation of the coefficients $c_r$}\label{ss:cr}

Using the identification between $\PGO(R)$ and $\B_3(K,L)$, we can now write the involution $\sigma$ as
\begin{align*}
\sigma(u_r(t))=M^{-1}\overline{u_r(t)}M=u_{\sigma(r)}(c_r\overline{t}).
\end{align*}
for all $r\in \Phi_{\B_3}$. 
Using~\eqref{eq:sigmaB3} we find for the generators $\alpha_2,\alpha_3$ and $\alpha_4$ of $\Phi_{\B_3}$ that $c_{\alpha_2}=\alpha\beta$, $c_{\alpha_3}=\alpha^{-1}$ and $c_{\alpha_4}=\alpha\beta^{-1}$.

It remains to determine the coefficient $c_{\alpha_1}$ because then all $c_r$ follow using the Chevalley commutator relations. Since the anisotropic subbuilding is of the right form, the only thing we still have to express is that $\sigma$ should be an involution. This is fulfilled if $\overline{c_{\alpha_1}}c_{\sigma(\alpha_1)}=1$.

We would like to deduce all coefficients $c_r$ with $r\in\Phi^+$ arbitrary and consequently $c_{\sigma(\alpha_1)}$.
By applying $\sigma$ on the non-trivial relations from~\eqref{eq:comm2}, we see that
\begin{alignat*}{2}
	&c_r c_s = c_{r+s} &&\text{ when } r,s \in \Phi_s \text{ and } r+s \in \Phi_s \,, \\
	&c_r c_s = c_{r+s} &&\text{ when } r,s \in \Phi_\ell \text{ and } r+s \in \Phi_\ell \,, \\
	&c_r c_s = c_{r+s} \text{ and } c_r^2 c_s = c_{2r+s} \ &&\text{ when }
		r \in \Phi_s, s \in \Phi_\ell \text{ and } r+s \in \Phi_s \,.
\end{alignat*}
Now let $r$ be any positive root, and write $r=\sum_i\lambda_i\alpha_i$, with all $\lambda_i$ positive integers.
By \cite[Lemma 3.6.2]{Carter} (or \cite[Chapter VI, section 1.6, Proposition~19]{BourLie}), we can obtain $r$ by adding one fundamental root $\alpha_i$ at a time,
and therefore we inductively obtain $c_r=\prod{c^{\lambda_i}_{\alpha_i}}$.
When we apply this on $\sigma(\alpha_1) = \alpha_1 + 3\alpha_2 + 2\alpha_3 + \alpha_4$,
we find that $\overline{c_{\alpha_1}}c_{\sigma(\alpha_1)}=1$ for $c_{\alpha_1}=\alpha^{-1}\beta^{-1}$.

The other coefficients (belonging to negative roots) can be found using the relation $\overline{c_{r}}c_{\sigma(r)}=1$ for every $r\in\Phi$. As before, this relation follows from the fact that $\sigma$ is an involution.

\subsection{Description of the split saturated BN-pair }\label{multiplication}

We show in this section that $G^1=\langle U^1,V^1\rangle$ with 
\begin{align*}
U^1:= U_J \cap\Fix(\sigma)\\
V^1:=U_J^{-}\cap\Fix(\sigma)
\end{align*}
has a split saturated BN-pair of rank one. 

We verify that $\sigma$ satisfies condition (2) on page~\pageref{sigma}:
\begin{lemma}\label{le:sigma2}
No parabolic subgroups of $L_J(K,L)$ are fixed.
\end{lemma}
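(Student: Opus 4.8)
The plan is to prove Lemma~\ref{le:sigma2} by translating the statement about parabolic subgroups of $L_J(K,L)$ into the explicit matrix picture we have set up for $\B_3(K,L) \cong \PGO(R)$. Recall that $L_J(K,L)$ is generated by $H$ together with the root groups $U_r$ for $r \in \Phi_J$, where $J = \{\alpha_2,\alpha_3,\alpha_4\}$, so $L_J(K,L)$ is (up to the torus) precisely the $\B_3(K,L)$-subgroup, which we have identified with $\PGO(R)$. Under this identification, the restriction of $\sigma$ is the concrete involution $x \mapsto M^{-1}\overline{x}M$ from~\eqref{eq:sigmaB3}, whose fixed points form $\PGO(q)^{S^{-1}}$, the projective orthogonal group of the \emph{anisotropic} mixed quadratic form $q$ (the trace-zero part of an octonion norm form). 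So the claim becomes: the anisotropic group $\PGO(q)^{S^{-1}}$ stabilizes no proper parabolic subgroup of $\PGO(R)$.

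First I would recall that the parabolic subgroups of $L_J(K,L) \cong \PGO(R)$ correspond, via the building of type $\B_3$, to the totally isotropic subspaces of the quadratic space $(L \oplus K^6, R)$ — concretely, to the isotropic points, lines and planes of the mixed quadric described in Example~\ref{ex:mixedq}. A $\sigma$-invariant parabolic subgroup therefore corresponds to a $\sigma$-invariant flag of totally isotropic subspaces. The key reduction is that a $\sigma$-invariant isotropic subspace $W$ must be fixed \emph{setwise} by the action $v \mapsto$ (the $M$-twisted conjugate action on vectors), and hence $W \cap \Fix(\sigma)$, interpreted on the level of the underlying vector space, gives a nonzero totally isotropic subspace for the restricted form. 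But that restricted form is exactly (an isometric copy of) the \emph{anisotropic} form $q$: since $q$ is anisotropic, it admits no nonzero isotropic vectors, so no such invariant isotropic subspace can exist, forcing $P = L_J(K,L)$.

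The heart of the argument is therefore the anisotropy of $q$. I would establish this by using the octonion description from Remark~\ref{re:octmult}: $q$ is the trace-zero restriction of the norm form $\N$ of the octonion division algebra $\oct_\ell$, and the norm form of a division composition algebra is anisotropic by definition. The trace-zero part of an anisotropic Pfister-type norm form is again anisotropic (a norm-zero trace-zero element would yield a zero divisor in the octonion algebra, contradicting that it is a division algebra). Concretely, $q(y_1,y_2,y_3,y_4) = y_1^2 + \alpha y_2\overline{y_2} + \beta y_3\overline{y_3} + \alpha\beta y_4\overline{y_4}$ vanishes only when each term vanishes, which over a division algebra forces $y_1 = y_2 = y_3 = y_4 = 0$. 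This is what makes $\PGO(q)$ anisotropic and is the structural reason condition~\eqref{sigma} holds.

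The main obstacle I anticipate is making the passage from ``$\sigma$-fixed orthogonal group'' to ``fixed totally isotropic subspace'' fully rigorous in the mixed $(K,L)$-setting, where the vector space $L \oplus K^6$ is not a vector space over a single field and the involution $x \mapsto M^{-1}\overline{x}M$ acts semilinearly rather than linearly. In particular I would need to verify that a $\sigma$-invariant parabolic really does descend to an honest totally isotropic subspace of the anisotropic space, i.e.\ that taking fixed points commutes appropriately with the geometry of the mixed quadric and does not lose the isotropy; this is precisely the point where the identification $\Fix(\sigma) \cong \PGO(q)^{S^{-1}}$ and the isometry $Q \cong R$ via $S$ do the essential work, and care is needed because $S^t[R]_\mathcal{C}S \neq [Q]_\mathcal{B}$ as matrices (they only represent the same form). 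Once that descent is in place, the conclusion is immediate from anisotropy.
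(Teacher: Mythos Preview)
Your approach is correct in spirit and is genuinely different from the paper's proof. Both arguments reduce to the anisotropy of $q$, but they reach that point by very different routes.

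The paper never tries to descend a $\sigma$-invariant isotropic subspace to a $q$-isotropic vector. Instead, it works entirely on the group side and with explicit coordinates: it identifies the standard chamber with the flag $(\langle x_1\rangle,\langle x_1,x_2\rangle,\langle x_1,x_2,x_3\rangle)$ in the hyperbolic basis, checks directly on generators that $\sigma$ sends this flag to the opposite flag $(\langle y_1\rangle,\langle y_1,y_2\rangle,\langle y_1,y_2,y_3\rangle)$, and hence for any $g$ sends $g(x_i)$-spans to $\sigma(g)(y_i)$-spans. From a fixed parabolic one then gets $\sigma(g)(y_1)\in\langle g(x_1),g(x_2),g(x_3)\rangle$, in particular $g(x_1)\perp\sigma(g)(y_1)$. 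Writing $g(x_1)=(z,z_1,a_1,z_2,a_2,z_3,a_3)$ and using the explicit form of $M$, this perpendicularity together with $R(g(x_1))=0$ is massaged into an equation $q(\,\cdot\,)=0$ with nonzero argument, contradicting anisotropy. So the paper's argument is a direct coordinate computation that bypasses Galois descent entirely.

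Your route is more conceptual: the semilinear map $\psi\colon v\mapsto M^{-1}\overline v$ induces the $\sigma$-action on the building (this is correct: one checks $\psi(Av)=\sigma(A)\psi(v)$ and $\psi(x_i)\in Ky_i$), and a $\psi$-stable isotropic subspace should contain a nonzero $\psi$-fixed isotropic vector, which transports via $S^{-1}$ to a $q$-isotropic vector. What your descent step needs, and what you correctly flag as the obstacle, is the existence of that nonzero fixed vector in characteristic~$2$. This does work: for $w$ in the invariant subspace, either $\psi(w)\in Kw$, in which case $\psi(w)=\lambda w$ with $\N_{K/k}(\lambda)=1$ and Hilbert~90 gives $\mu\in K^\times$ with $\mu w$ fixed; or $w$ and $\psi(w)$ are independent and $w+\psi(w)$ is a nonzero fixed vector. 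One then checks (using $M=\overline{S}S^{-1}$) that $\psi(v)=v$ is equivalent to $S^{-1}v$ having coordinates in $\ell\oplus k^3$, whence $R(v)=Q(S^{-1}v)=q(S^{-1}v)$. The mixed structure $L\oplus K^6$ causes no real trouble here because the $L$-coordinate of an isotropic vector is determined by the $K^6$-part, so the descent really takes place over the honest $K/k$ extension. You should also keep the paper's preliminary reduction from $L_J(K,L)$ to $\B_3(K,L)$, since $L_J(K,L)$ carries the full $\F_4$-torus and is strictly larger than $\B_3(K,L)$; a fixed parabolic of $L_J(K,L)$ intersects $\B_3(K,L)$ in a fixed parabolic there.

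In short: your plan can be completed and gives a cleaner, more structural proof; the paper's version trades conceptual clarity for an entirely self-contained computation that avoids the descent subtleties you anticipated.
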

\begin{proof}
We prove that no parabolic subgroups of $\B_3(K,L)$ are fixed. This is enough since if $L_J(K,L)$ has a parabolic subgroup $P$ fixed by $\sigma$, then $P\cap\B_3(K,L)$ is a fixed parabolic subgroup of $\B_3(K,L)$.
%

We have a closer look at the building corresponding to a general group $G$ with BN-pair $(B,N)$.
By \cite[Section 6.2.6]{AbramenkoBrown}, the parabolic subgroups of $G$ ordered by the opposite of the inclusion relation form the simplicial complex of the building. In particular, the chambers (i.e.\@ maximal flags) correspond to conjugates under $G$ of $B$. Also, the group $B$ is exactly the stabilizer in $G$ of the chamber corresponding to $B$. Furthermore, the parabolic subgroups $P_J=BN_JB$ are exactly the stabilizers in $G$ of their corresponding flags.

The group $\B_3(K,L)$ has a BN-pair $(\B_3(K,L)\cap B(K,L), \B_3(K,L)\cap N(K,L))$. Since $\PGO(R)$ is isomorphic to $\B_3(K,L)$, we know from the theory of buildings that the building we obtain is the mixed quadric corresponding to $R$. More specifically, the quadric corresponding to $R$ has points, lines and planes (since $R$ has Witt index 3). The flags of this quadric are then exactly the flags of the building of $\PGO(R)$. So conjugates of $\B_3(K,L)\cap B(K,L)$ correspond to triples $(p,L,\pi)$ with $p$ a point of $L$ and $L$ a line on the plane $\pi$, while maximal parabolic subgroups correspond to points, lines or planes of the mixed quadric.
From now on, we will assume that $(p,L,\pi)$ is the maximal flag corresponding to the standard minimal parabolic subgroup $\B_3(K,L)\cap B(K,L)$.

Let $\mathcal{C}=( x, x_1, y_1, x_2, y_2, x_3, y_3)$ be a hyperbolic basis for the mixed quadratic form $R$,
i.e.\@ a basis for the $K$-vectorspace $L\oplus K^6$ such that
\[ R(x)=1, \langle x,x\rangle=\langle x,x_i\rangle=\langle x,y_i\rangle=\langle x_i,x_j\rangle =\langle y_i,y_j\rangle =0, \langle x_i, y_j\rangle =\delta_{ij}, \]
where $\langle \cdot , \cdot \rangle$ is the bilinear form corresponding to $R$.

We observe that $(\langle x_1\rangle, \langle x_1,x_2\rangle, \langle x_1,x_2,x_3\rangle)$ forms a chamber in the building of $\PGO(R)$.
We claim that this chamber is precisely the chamber $(p,L,\pi)$ corresponding to the standard minimal parabolic $\B_3(K,L) \cap B(K,L)$
under the isomorphism between $\B_3(K,L)$ and the matrix group corresponding to $\PGO(R)$ constructed in section~\ref{Isomorphism}.
To prove our claim, we have to show that all generators of $\B_3(K,L)\cap B(K,L)$ fix the subspaces $\langle x_1\rangle, \langle x_1,x_2\rangle$ and $\langle x_1,x_2,x_3\rangle$.
Consider the generators of the form $x_{e_1-e_2}(t)$ with $t\in K$. These elements correspond
to matrices $A=\I+t(E_{1,2}+E_{-2,-1})$, so we need to check that
\begin{align*}
&A(0,\lambda_1,0,0,0,0,0)^t\in \langle x_1\rangle\\
&A(0,\lambda_1,0,\lambda_2,0,0,0)^t\in \langle x_1,x_2 \rangle\\
&A(0,\lambda_1,0,\lambda_2,0,\lambda_3,0)^t\in \langle x_1,x_2,x_3\rangle
\end{align*}
for all $\lambda_1,\lambda_2, \lambda_3\in K$, which is easily verified.
The other generators can be treated similarly, and this proves our claim.

Now suppose that a parabolic subgroup S of $\B_3(K,L)$ is fixed by $\sigma$; our goal is to derive a contradiction.
As $\sigma$ is type-preserving, we know that if a flag is fixed, then certainly a point, line or plane must be fixed.
As all parabolic subgroups are conjugate, there is some $g \in \B_3(K,L)$ such that $S=P^g$,
where $P$ is a standard parabolic subgroup, i.e.\@ $P$ contains $(\B_3(K,L)\cap B(K,L))$.
So $S$ corresponds to a flag contained in the chamber $(g(p),g(L),g(\pi))$, and hence one of the subspaces $\langle g(x_1)\rangle$, $\langle g(x_1),g(x_2)\rangle$ or $\langle g(x_1),g(x_2),g(x_3) \rangle$ has to be fixed under $\sigma$. 

We claim that the involution $\sigma$ maps $g(p)$, $g(L)$ and $g(\pi)$ to $\langle\sigma(g)(y_1)\rangle$, $\langle\sigma(g)(y_1),\sigma(g)(y_2)\rangle$ and $\langle\sigma(g)(y_1),\sigma(g)(y_2),\sigma(g)(y_3)\rangle$, respectively.
From this we deduce that in each of the $3$ cases (fixed point, line or plane),
$\sigma(g)(y_1)\in \langle g(x_1), g(x_2), g(x_3)\rangle$.
In particular, $\langle g(x_1)\rangle\perp \langle \sigma(g)(y_1)\rangle$. 

In order to prove our claim, we need to show that $\sigma(B^g)$, where $B^g$ is the stabilizer of the flag 
\[ \bigl( g(\langle x_1\rangle), \; g(\langle x_1,x_2\rangle), \; g(\langle x_1,x_2,x_3\rangle) \bigr) ,\] 
fixes the flag 
\[ \bigl( \langle\sigma(g)(y_1)\rangle, \; \langle\sigma(g)(y_1),\sigma(g)(y_2)\rangle, \; \langle\sigma(g)(y_1),\sigma(g)(y_2),\sigma(g)(y_3)\rangle \bigr) .\]
This is equivalent to showing that
$\sigma(B)$ is the stabilizer of the flag
\[ \bigl( \langle y_1\rangle,\langle y_1,y_2\rangle,\langle y_1,y_2,y_3\rangle \bigr) . \]
This last statement can again easily be checked on each of the generators of~$B$, and this proves our claim.

Suppose next that $g(x_1)=(z, z_1,a_1,z_2,a_2,z_3,a_3)$, then $z^2+\sum_i z_ia_i=0$ since $R(g(x_1))=R(x_1)=0$.
Notice that $g(x_1)$ is the second column of the matrix corresponding to $g$ and that $\sigma(g)(y_1)=M^{-1}\overline{g}M(y_1)$ is the third column of the matrix $M^{-1}\overline{g}M$;
this implies, using the explicit description of the matrix $M$, that
\[ \sigma(g)(y_1)=\alpha^{-1} \bigl( \overline{z},\alpha^{-1}\overline{a_1},\alpha\overline{z_1},\beta^{-1}\overline{a_2},\beta\overline{z_2},\alpha^{-1}\beta^{-1}\overline{a_3},\alpha\beta\overline{z_3} \bigr) . \]
Since $\langle g(x_1)\rangle\perp \langle \sigma(g)(y_1)\rangle$, we get
\[ \alpha z_1\overline{z_1}+\alpha^{-1}a_1\overline{a_1}+\beta z_2 \overline{z_2}+\beta^{-1}a_2\overline{a_2}+\alpha\beta z_3\overline{z_3}+\alpha^{-1}\beta^{-1}a_3\overline{a_3}=0. \]
This is equivalent with
\begin{multline*}
	(z+\overline{z})^2+\alpha (\overline{z_1}+\alpha^{-1}a_1)(z_1+\alpha^{-1}\overline{a_1})+\beta (\overline{z_2}+\beta^{-1}a_2)(z_2+\beta^{-1}\overline{a_2}) \\
	+\alpha\beta(\overline{z_3}+\alpha^{-1}\beta^{-1}a_3)(z_3+\alpha^{-1}\beta^{-1}\overline{a_3})=0 .
\end{multline*}
Since $q$ is anistropic, this implies $a_1=\alpha\overline{z_1}$, $a_2=\beta\overline{z_2}$ and $a_3=\alpha\beta\overline{z_3}$. Finally, by expressing again that $R(g(x_1))=0$, we obtain that
\[ z^2+\alpha z_1\overline{z_1}+\beta z_2\overline{z_2}+\alpha\beta z_3\overline{z_3}=0 , \]
and hence $z=0$ and $z_i=0$ for all $i$, a contradiction.
We conclude that no parabolic subgroup of $\B_3(K,L)$ is fixed.
\end{proof}

To proceed, we assemble a few lemmas about mixed BN-pairs.
We write $W$ for the Weyl group $N(K,L)/T(K,L)$, which is isomorphic to the Weyl group corresponding to a root system of type $\F_4$,
and we use the notation $C_W(\sigma)$ for the centralizer in $W$ of $\sigma$,
where we identify $\sigma$ with the corresponding element ${w_0}^J$ in the Weyl group $W_J$ generated by $w_{\alpha_2}$, $w_{\alpha_3}$ and $w_{\alpha_4}$
(see section~\ref{Involution}).
\begin{lemma}
Let $g\in \F_4(K,L)$ such that $\sigma(g)\in P_J(K,L)gP_J(K,L)$. If $P_J(K,L)gP_J(K,L)=P_J(K,L)nP_J(K,L)$ for some $n\in N(K,L)$ corresponding to the shortest element $w$ in $W_JwW_J$. Then $w\in C_W{(\sigma)}$.
\end{lemma}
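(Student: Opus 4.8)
The plan is to exploit the double-coset rigidity of the Bruhat decomposition together with the fact that $\sigma$ is type-preserving (indeed, it is induced by $w_0^J \in W_J$). First I would observe that the hypothesis $\sigma(g) \in P_J(K,L)\,g\,P_J(K,L) = P_J(K,L)\,n\,P_J(K,L)$ means $\sigma(g)$ and $g$ lie in the same $(P_J,P_J)$-double coset. Applying the involution $\sigma$ to the standard parabolic $P_J(K,L)$, I would first establish how $\sigma$ permutes the parabolics: since the action of $\sigma$ on the root system is exactly $w_0^J$, which stabilizes the set $\Phi_J$ (it is the longest element of $W_J$, so it maps $\Phi_J$ to itself, permuting the simple roots in $J$), $\sigma$ sends $B(K,L)$-based standard parabolics of type $J$ to parabolics of the same type. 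Concretely, $\sigma(P_J(K,L)) = P_{\sigma(J)}(K,L) = P_J(K,L)$ because $\sigma(J) = J$ as a set of root \emph{directions}.

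Next I would translate the double-coset condition into the Weyl group. Because $W_J\,w\,W_J$ has a unique shortest element $w$, and $P_J n P_J$ depends only on the double coset $W_J w W_J$ (this is the Bruhat-style decomposition for parabolics, valid here for mixed groups by the remarks following Theorem~\ref{T(K,L)} which give $X(K,L) = B(K,L)\,N(K,L)\,B(K,L)$ and the parabolic form~\eqref{eq:PJ(K,L)}), the condition $\sigma(g) \in P_J(K,L)\,g\,P_J(K,L)$ becomes a statement about the image of this double coset under $\sigma$. Applying $\sigma$ to the decomposition $g = u\,l\,n\,u'$ furnished by Lemma~\ref{mixed}, and using $\sigma(P_J(K,L)) = P_J(K,L)$ together with $\sigma(N(K,L)) = N(K,L)$ (condition~(1) on the involution, page~\pageref{sigma}), I would deduce that $\sigma(g)$ lies in the double coset indexed by $w_0^J\, w\, w_0^J$ read through $\sigma$; more precisely, the shortest-element representative of the double coset containing $\sigma(g)$ is $\sigma$ applied to $w$, i.e.\ $w^\sigma = w_0^J\, w\, (w_0^J)^{-1}$ interpreted via the $\sigma$-action on $W$.

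The crux is then to match the two shortest elements. Since $\sigma(g)$ and $g$ lie in the \emph{same} double coset $P_J(K,L)\,n\,P_J(K,L)$ by hypothesis, and the shortest element of a $(W_J,W_J)$-double coset is unique, I would conclude that the shortest element attached to $\sigma(g)$ equals the shortest element $w$ attached to $g$. Combining this with the previous paragraph's computation that $\sigma$ carries the shortest element $w$ to the shortest element of $\sigma$'s double coset, this forces $w^\sigma = w$, i.e.\ $w$ commutes with $\sigma$ in $W$, which is exactly $w \in C_W(\sigma)$.

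The main obstacle I anticipate is the bookkeeping in the middle step: verifying that $\sigma$ genuinely sends the shortest-element representative of $W_J w W_J$ to the shortest-element representative of the image double coset, rather than to some longer element in it. This requires checking that $\sigma$ preserves the length function restricted to the relevant minimal-length coset representatives — which follows because $\sigma$ acts on $\Phi$ as $w_0^J$ and $w_0^J$ stabilizes $\Phi_J$ and hence preserves the decomposition $\Phi^+ = \Phi_J^+ \sqcup (\Phi^+ \setminus \Phi_J)$ appropriately. One must be careful that $\sigma$ is length-preserving on the double-coset representatives precisely because conjugation/action by an element normalizing $W_J$ permutes $\Phi_J$ and thus maps minimal-length $(W_J,W_J)$-representatives to minimal-length ones. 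Once this length-preservation is in hand, the uniqueness of the shortest double-coset representative closes the argument immediately.
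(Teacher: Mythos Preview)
Your overall plan---show $\sigma(P_J)=P_J$, apply $\sigma$ to the double coset $P_J n P_J$, and then invoke uniqueness of the minimal $(W_J,W_J)$-representative---is exactly the shape of the argument the paper imports from \cite{Steinbach1}. The difficulty, as you correctly isolate, is the step asserting that $\sigma$ carries the minimal element of $W_JwW_J$ to the minimal element of its image coset. Your justification of this step, however, does not hold. You claim that ``conjugation by an element normalizing $W_J$ maps minimal-length $(W_J,W_J)$-representatives to minimal-length ones''; but already in $W=S_3$ with $J=\{s_1\}$ and $w_0^J=s_1$, the minimal representative $s_2$ is sent under conjugation to $s_1s_2s_1$, of length~$3$. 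The same phenomenon occurs in the actual $\F_4$ situation: $s_{\alpha_1}$ is minimal in its $(W_J,W_J)$-double coset, yet $w_0^J\,s_{\alpha_1}\,w_0^J=s_{\sigma(\alpha_1)}$ with $\sigma(\alpha_1)=\alpha_1+3\alpha_2+2\alpha_3+\alpha_4$, which has length far greater than~$1$. The point is that minimality of $w$ gives only $w(\Pi_J)\subseteq\Phi^+$, whereas your computation tacitly requires the stronger inclusion $w(\Pi_J)\subseteq\Phi_J^+$ (so that applying $w_0^J$ flips the sign back); this is precisely the condition $w\in\Stab(\Phi_J)$, which is the content of the \emph{next} lemma, not something you may assume here.

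The underlying reason your argument breaks is that in this paper $\sigma$ acts on $W$ by conjugation by $w_0^J\in W_J$, and this is \emph{not} an automorphism of the Coxeter system $(W,S)$ (it does not send $s_{\alpha_1}$ to a simple reflection), hence is not length-preserving. Notice also a warning sign: since $w_0^J\in W_J$, the equality $W_J\,\sigma(w)\,W_J=W_JwW_J$ that you derive is \emph{automatic}, independent of the hypothesis $\sigma(g)\in P_J\,g\,P_J$; so at the level of double cosets in $W$ the hypothesis has contributed nothing, and your argument cannot possibly be using it. To repair the proof you must either exploit the hypothesis at a finer level than the $(W_J,W_J)$-double coset (as Steinbach's original argument does), or establish directly, by a computation specific to this $\F_4$ setting, that the relevant minimal representatives lie in $\Stab(\Phi_J)$ and hence commute with $w_0^J$.
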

\begin{proof}
	See \cite[Lemma 2.4]{Steinbach1}.
	Although the proof is not stated for mixed Chevalley groups, it can be copied almost verbatim,
	by replacing $P_J$ and $H$ by $P_J(K,L)$ and $T(K,L)$, respectively.
\end{proof}
The next lemma is a mixed version of \cite[Lemma 2.5]{Steinbach1}.
We notice that in the proof of this lemma we need the assumption (2) made in Section~\ref{Involution}, page~\pageref{sigma}, which we proved in Lemma~\ref{le:sigma2} above.
\begin{lemma}\label{Stab}
Let $g\in \F_4(K,L)$ with $^g\!P_J(K,L)=gP_J(K,L)g^{-1}$ invariant under $\sigma$. If $P_J(K,L)gP_J(K,L)=P_J(K,L)nP_J(K,L)$, with $n\in N(K,L)$ such that the corresponding element $w$ of $W$ is the shortest element in $W_JwW_J$, then $w\in C_W(\sigma)\cap \Stab(\Phi_J)$.
\end{lemma}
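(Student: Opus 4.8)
The plan is to prove the two assertions $w\in\Stab(\Phi_J)$ and $w\in C_W(\sigma)$ separately: the second will follow quickly from the preceding lemma, while the first is the genuinely new input and rests on Lemma~\ref{le:sigma2}. The key preliminary observation is that $\sigma$ stabilizes the standard parabolic $P_J(K,L)$ together with its unipotent radical $U_J(K,L)$ and its standard Levi $L_J(K,L)$. Indeed, $\sigma$ acts on $\Phi$ as the element $w_0^J\in W_J$; such an element fixes $\Phi_J$ setwise (interchanging $\Phi_J^+$ and $\Phi_J^-$) and permutes $\Phi^+\setminus\Phi_J$ among itself (the standard property that any element of $W_J$ preserves $\Phi^+\setminus\Phi_J$), it preserves root lengths, and the conjugation $t\mapsto\overline t$ preserves both $K$ and $L$. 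Together with the first condition on $\sigma$, which makes $N(K,L)$ and hence $T(K,L)$ invariant, this shows that $\sigma$ sends each defining generator of $U_J(K,L)$ and of $L_J(K,L)$ to a generator of the same type; hence $\sigma\bigl(P_J(K,L)\bigr)=P_J(K,L)$, $\sigma\bigl(U_J(K,L)\bigr)=U_J(K,L)$ and $\sigma\bigl(L_J(K,L)\bigr)=L_J(K,L)$.

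Next I would extract the hypothesis of the preceding lemma. Using $\sigma\bigl(P_J(K,L)\bigr)=P_J(K,L)$, the invariance $\sigma\bigl({}^{g}P_J(K,L)\bigr)={}^{g}P_J(K,L)$ rewrites as $\sigma(g)\,P_J(K,L)\,\sigma(g)^{-1}=g\,P_J(K,L)\,g^{-1}$, so that $g^{-1}\sigma(g)$ normalizes $P_J(K,L)$. Since parabolic subgroups of a group with a BN-pair are self-normalizing, $g^{-1}\sigma(g)\in P_J(K,L)$, whence $\sigma(g)\in g\,P_J(K,L)\subseteq P_J(K,L)\,g\,P_J(K,L)$. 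The preceding lemma now applies and yields $w\in C_W(\sigma)$.

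It remains to prove $w\in\Stab(\Phi_J)$, and here assumption~(2), in the form of Lemma~\ref{le:sigma2}, is essential. Writing $g=p_1np_2$ with $p_1,p_2\in P_J(K,L)$, we have ${}^{g}P_J(K,L)=p_1\,{}^{n}P_J(K,L)\,p_1^{-1}$, and therefore
\[ P_J(K,L)\cap {}^{g}P_J(K,L)=p_1\bigl(P_J(K,L)\cap {}^{n}P_J(K,L)\bigr)p_1^{-1}. \]
Because $\sigma$ fixes both $P_J(K,L)$ and ${}^{g}P_J(K,L)$, this intersection is $\sigma$-invariant. Projecting along $\pi\colon P_J(K,L)\to L_J(K,L)=P_J(K,L)/U_J(K,L)$ and using that $\sigma$ preserves $P_J(K,L)$, $U_J(K,L)$ and the standard Levi $L_J(K,L)$, one sees that $\pi$ intertwines $\sigma$ with $\sigma|_{L_J(K,L)}$, so $\pi\bigl(P_J(K,L)\cap {}^{g}P_J(K,L)\bigr)$ is a $\sigma$-invariant parabolic subgroup of $L_J(K,L)$. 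By the relative-position description of two parabolics of the same type for the minimal double-coset representative $w$ (the refined Bruhat decomposition), this parabolic is $L_J(K,L)$-conjugate to the standard parabolic of $L_J(K,L)$ attached to the subsystem $\Phi_J\cap w(\Phi_J)$, and it is \emph{proper} exactly when $\Phi_J\cap w(\Phi_J)\neq\Phi_J$, i.e.\ exactly when $w\notin\Stab(\Phi_J)$. Were $w\notin\Stab(\Phi_J)$, we would thereby have produced a proper $\sigma$-invariant parabolic of $L_J(K,L)$, contradicting Lemma~\ref{le:sigma2}; hence $w\in\Stab(\Phi_J)$, and combined with $w\in C_W(\sigma)$ the lemma follows.

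The hard part is the last step: correctly identifying the image $\pi\bigl(P_J(K,L)\cap {}^{g}P_J(K,L)\bigr)$ as a parabolic of $L_J(K,L)$ of type $\Phi_J\cap w(\Phi_J)$ — so that its properness is equivalent to $w\notin\Stab(\Phi_J)$ — and checking that the mixed $(K,L)$-structure is respected throughout, so that Lemma~\ref{le:sigma2} applies verbatim. A secondary point requiring care is that $\sigma$ descends to precisely $\sigma|_{L_J(K,L)}$ rather than to some twisted involution, which is exactly what the first paragraph secures.
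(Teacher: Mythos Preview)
Your proof is correct and follows essentially the same approach as the paper's: both derive $w\in C_W(\sigma)$ from the preceding lemma (you make the step $\sigma(g)\in P_J(K,L)gP_J(K,L)$ explicit via self-normalization of parabolics, which the paper leaves implicit), and both obtain $w\in\Stab(\Phi_J)$ by identifying a $\sigma$-invariant parabolic of $L_J(K,L)$ attached to $I=J\cap w(J)$ and invoking Lemma~\ref{le:sigma2}. The only cosmetic difference is that the paper uses the explicit identity $P_I(K,L)=U_J(K,L)\bigl(P_J(K,L)\cap{}^{n}P_J(K,L)\bigr)$ and then intersects with $L_J(K,L)$, whereas you phrase the same computation via the projection $\pi\colon P_J(K,L)\to L_J(K,L)$; since $U_J(K,L)\subseteq P_I(K,L)$ these two formulations are equivalent.
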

\begin{proof}
Let $g=pnp'$ with $n\in N(K,L)$ and $p,p'\in P_J(K,L)$.
Let $I:=J\cap w(J)$; then
\[ P_I(K,L) = U_J(K,L)(P_J(K,L)\cap {^n\!P_J(K,L)}) . \]
Hence $^p\!P_I(K,L)=U_J(K,L)(P_J(K,L)\cap {^g\!P_J(K,L)})$ is $\sigma$-invariant.
Furthermore, if $p=lu$ with $l\in L_J(K,L)$ and $u\in U_J(K,L)$, then
\[ ^l(L_J\cap P_I(K,L))=L_J(K,L)\cap {^p\!P_I(K,L)} \]
is a parabolic subgroup of $L_J(K,L)$.
By Lemma~\ref{le:sigma2}, $L_J(K,L)\cap {\,^p\!P_I(K,L)}=L_J(K,L)$.
We conclude that $P_J(K,L)\subseteq P_I(K,L)$, so $J=I$ and therefore $w(J)=J$.
\end{proof}
\begin{lemma}\label{Product}
Let $1\neq w \in W$ with $w(\Phi_J^+)=\Phi_J^+$. Then $w_0^Jw = w_0$.
\end{lemma}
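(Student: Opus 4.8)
The plan is to show that $s_{e_4}$, the reflection in the short root $e_4$, is the \emph{only} non-trivial element of $W$ stabilising $\Phi_J^+$, and then to verify the identity $w_0^J s_{e_4} = w_0$ by a direct computation in the coordinates fixed above. Thus the lemma really asserts that there is a unique such $w$, and that it is $w_0^J w_0$.

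First I would analyse the constraint $w(\Phi_J^+) = \Phi_J^+$. Since $w$ is a linear automorphism preserving both $\Phi_J$ and its positive system, it sends indecomposable positive roots to indecomposable positive roots, hence permutes the simple roots $\{\alpha_2,\alpha_3,\alpha_4\}$ of the subsystem $\Phi_J$ of type $\B_3$, inducing an automorphism of its Dynkin diagram. The Dynkin diagram of $\B_3$ admits only the trivial automorphism (the double bond between $\alpha_2$ and $\alpha_3$ breaks the symmetry of the underlying path, so the end-swapping reflection is not a diagram automorphism), whence $w$ fixes $\alpha_2$, $\alpha_3$ and $\alpha_4$ individually. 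As these three roots span $\langle e_1,e_2,e_3\rangle$, the element $w$ fixes this subspace pointwise; being orthogonal and non-trivial, it must then act as $-1$ on the orthogonal line $\langle e_4\rangle$, i.e.\ $w = s_{e_4}$. (Note $e_4 \in \Phi$, so indeed $s_{e_4} \in W$.)

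It then remains to check $w_0^J s_{e_4} = w_0$. Here I would use that both $\F_4$ and $\B_3$ contain $-1$ in their Weyl groups, so that $w_0 = -\id$ on $\R^4$, while $w_0^J$ acts as $-\id$ on $\langle e_1,e_2,e_3\rangle$ and fixes $e_4$ (being the longest element of $W_J$, which acts trivially on the orthogonal complement of the span of $\Phi_J$; this matches the action of $\sigma$ recorded in Section~\ref{Involution}). Composing, $w_0^J s_{e_4}$ negates $e_1,e_2,e_3$ and also negates $e_4$, hence equals $-\id = w_0$, as desired.

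The only genuinely delicate point is the first step, namely translating ``preserves $\Phi_J^+$'' into ``permutes the simple roots of $\Phi_J$'' and then ruling out non-trivial diagram automorphisms of $\B_3$; everything after that is a short coordinate computation. It is also worth noting that the computation is insensitive to the left/right convention for the action of $W$, since $w_0^J$ and $s_{e_4}$ are commuting diagonal operators for the decomposition $\langle e_1,e_2,e_3\rangle \oplus \langle e_4\rangle$.
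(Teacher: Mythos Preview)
Your argument is correct. The paper itself does not prove this lemma at all: it simply cites \cite[Lemma 2.6]{Steinbach1}, which is a general statement about parabolic subgroups in Weyl groups, not tailored to the $\F_4$ setting. Your proof is therefore genuinely different in approach: you give a self-contained argument exploiting the concrete realization of $\Phi$ in $\R^4$ and the fact that the Dynkin diagram of $\B_3$ has no non-trivial automorphisms, so that the stabiliser of $\Phi_J^+$ in $W$ reduces to $\{1, s_{e_4}\}$, after which the identity $w_0^J s_{e_4} = w_0$ is an immediate diagonal computation. What the citation buys is generality (Steinbach's lemma applies in any situation where the relative Weyl group has order~$2$); what your approach buys is independence from the external reference and transparency in this particular case, at the cost of being tied to the explicit coordinates of $\F_4$. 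Both are entirely legitimate here, and your version has the advantage of making visible the fact that the unique non-trivial stabilising element is exactly the reflection $s_{e_4} = w_{e_4}$ that the paper will use as the Weyl element of the rank-one BN-pair.
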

\begin{proof}
See \cite[Lemma 2.6]{Steinbach1}.
\end{proof}

In the next paragraph, we will prove that $B^1:=P_J(K,L)\cap G^1$, together with a suitable $N^1$ (which we will construct on the way) forms a split saturated BN-pair for $G^1$.
We let $H^1:=L_J(K,L)\cap G^1$.

We use the proof of \cite[Lemma 2.7]{Steinbach1} to construct an element $\tilde{n}\in (n_0L_J)\cap G^1$ with $n_0$ an arbitrary element of $N(K,L)$ such that $n_0T(K,L)=w_0$, the longest element in $W=N(K,L)/T(K,L)$. 
\begin{lemma}
Let $n_0\in N(K,L)$ be such that $n_0T(K,L)=w_0$, then $n_{e_4}\in n_0L_J(K,L)\cap G^1$.
\end{lemma}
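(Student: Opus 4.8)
The statement has two independent assertions: that $n_{e_4}$ lies in the coset $n_0 L_J(K,L)$, and that $n_{e_4}\in G^1$. I would prove them separately.

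For the coset membership, the plan is to pass to the Weyl group $W = N(K,L)/T(K,L)$. Since $W$ is of type $\F_4$, its longest element $w_0$ acts as $-\id$ on $\R^4$; likewise $w_0^J$, the longest element of $W_J = W(\B_3)$ for $J = \{\alpha_2,\alpha_3,\alpha_4\}$, acts as $-\id$ on $\langle e_1,e_2,e_3\rangle$ and fixes $e_4$. Comparing the two actions on the orthonormal basis, one checks that $w_0 = w_0^J\, w_{e_4}$, where $w_{e_4}$ is the reflection in the short root $e_4$: indeed $w_{e_4}$ fixes $e_1,e_2,e_3$ and negates $e_4$, so $w_0^J w_{e_4}$ sends every $e_i$ to $-e_i$. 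Now $n_{e_4}$ projects to $w_{e_4}$ and $n_0$ projects to $w_0$, whence $n_0^{-1}n_{e_4}$ projects to $w_0^{-1}w_{e_4} = w_0 w_{e_4} = w_0^J \in W_J$. As $N_J(K,L)$ is by definition the full preimage of $W_J$ under $N(K,L)\to W$, this gives $n_0^{-1}n_{e_4}\in N_J(K,L)\subseteq L_J(K,L)$, i.e.\ $n_{e_4}\in n_0 L_J(K,L)$.

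For membership in $G^1 = \langle U^1, V^1\rangle$, recall that $n_{e_4} = u_{e_4}(1)\,u_{-e_4}(-1)\,u_{e_4}(1) = u_{e_4}(1)\,u_{-e_4}(1)\,u_{e_4}(1)$ in characteristic $2$, and that $e_4\in\Phi^+\setminus\Phi_J$ while $-e_4\in\Phi^-\setminus\Phi_J$, so $u_{e_4}(1)\in U_J$ and $u_{-e_4}(1)\in U_J^-$. It therefore suffices to show that each of the three factors is fixed by $\sigma$. Since $\sigma(e_4)=e_4$, fixedness of $u_{\pm e_4}(1)$ is equivalent to $c_{e_4}=1$ and $c_{-e_4}=1$. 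The first follows from the product formula $c_r = \prod c_{\alpha_i}^{\lambda_i}$ of Section~\ref{ss:cr}: writing $e_4 = 2\alpha_1 + 3\alpha_2 + 2\alpha_3 + \alpha_4$ and substituting $c_{\alpha_1} = \alpha^{-1}\beta^{-1}$, $c_{\alpha_2} = \alpha\beta$, $c_{\alpha_3} = \alpha^{-1}$, $c_{\alpha_4} = \alpha\beta^{-1}$, all exponents of $\alpha$ and $\beta$ cancel, so $c_{e_4}=1$.

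The main obstacle — and the only genuinely delicate point — is to establish $c_{-e_4}=1$, since $-e_4$ lies outside the $\B_3$-subsystem and the involution relation $\overline{c_{-e_4}}\,c_{\sigma(-e_4)} = \overline{c_{-e_4}}\,c_{-e_4}=1$ only records that $c_{-e_4}$ has norm $1$. To pin it down I would invoke condition~(1) on $\sigma$: as $\sigma$ preserves $N(K,L)$ and $n_{e_4}\in N(K,L)$, the element $\sigma(n_{e_4}) = u_{e_4}(1)\,u_{-e_4}(c_{-e_4})\,u_{e_4}(1)$ again lies in $N(K,L)$. Working inside the rank-one subgroup $\langle U_{e_4}, U_{-e_4}\rangle$ through its $2\times 2$ matrix image, an element $u_{e_4}(1)\,u_{-e_4}(b)\,u_{e_4}(1)$ is monomial — hence normalizes the torus — exactly when $1+b=0$, i.e.\ when $b=1$ in characteristic $2$. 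This forces $c_{-e_4}=1$, so $u_{-e_4}(1)\in V^1$ and all three factors of $n_{e_4}$ are $\sigma$-fixed; consequently $n_{e_4}\in\langle U^1,V^1\rangle = G^1$ (and, as a by-product, $\sigma(n_{e_4})=n_{e_4}$). Combining this with the coset statement yields $n_{e_4}\in n_0 L_J(K,L)\cap G^1$.
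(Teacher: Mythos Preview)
Your proof is correct and reaches the same two conclusions as the paper, but by a more hands-on route.

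For the coset membership $n_{e_4}\in n_0 L_J(K,L)$, the paper does not compute the Weyl-group identity $w_0=w_0^J w_{e_4}$ directly. Instead it starts from the observation $u_{-e_4}(1)\in G^1\cap U_{\Phi^-\setminus\Phi_J^-}$, passes to the double coset $P_J u_{-e_4}(1) P_J = P_J n_{e_4} P_J$, and then invokes Lemma~\ref{Stab} together with Lemma~\ref{Product} to deduce $w_0 = w_0^J w_{e_4}$. This is the general machinery of \cite{Steinbach1}, designed to work uniformly for any involution satisfying conditions~(1)--(3). Your explicit check on the orthonormal basis $e_1,\dots,e_4$ is shorter and more transparent in this particular $\F_4$ situation, at the cost of being special to the case at hand; the paper's argument, on the other hand, simultaneously verifies that condition~(3) is non-vacuous and exhibits how the general framework produces the element $\tilde n$.

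For the membership $n_{e_4}\in G^1$, the paper simply asserts that each of the three factors $u_{e_4}(1),\,u_{-e_4}(1),\,u_{e_4}(1)$ is fixed by $\sigma$, relying on the construction of the coefficients $c_r$ in Section~\ref{ss:cr}. You go further and actually justify $c_{e_4}=1$ (via the product formula, matching the paper's $c_{r_1}=1$) and, more delicately, $c_{-e_4}=1$ by appealing to condition~(1) and an $\mathrm{SL}_2$ computation. That extra care is appropriate: the involution relation $\overline{c_{-e_4}}\,c_{-e_4}=1$ alone does not pin down $c_{-e_4}$. An alternative, slightly cleaner way to the same end is to note that for $\sigma$ to be a group automorphism it must respect the rank-one relation $n_r(t)=u_r(t)u_{-r}(-t^{-1})u_r(t)$, which forces $c_{-r}=c_r^{-1}$ for every root $r$; your monomiality argument is essentially a concrete instance of this.
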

\begin{proof}
We notice that $u_{-e_4}(1)\in G^1\cap U_{\Phi^{-}\setminus \Phi_J^{-}}$. Furthermore, $P_Ju_{-e_4}(1)P_J=P_Jn_{e_4}P_J$ with $w_{e_4}$ the shortest element in $W_Jw_{e_4}W_J$. Lemma~\ref{Stab} shows that $w_{e_4}\in C_W(\sigma)\cap \Stab(\Phi_J)$.
This together with Lemma \ref{Product} allows us to conclude that $w_0=w_0^{J}w_{e_4}$, with $w_{0}^J$ being the longest element in $\Phi_J$.

Since $W \cong N(K,L)/T(K,L)$, this yields that $n_{e_4}=n_0n_0^Jh$ for some $h\in T(K,L)$, so $n_{e_4}\in n_0L_J(K,L)$.
Since $n_{e_4} = u_{e_4}(1) u_{-e_4}(-1) u_{e_4}(1)$, and each of these three factors is fixed by $\sigma$, we also have $n_{e_4} \in G^1$, proving the lemma.
\end{proof}
Next, we define
\[ N^1 := \langle n_0,\ L_J(K,L)\rangle\cap G^1 . \]
This group is certainly non-trivial since $n_{e_4}\in N^1$. 

Similarly as with ordinary Chevalley groups, one can associate to the root sytem $\Phi$ and corresponding vector space $V$ a new root system $\tilde{\Phi}$ and vector space $\tilde{V}$ using the action of $\sigma$ on $V$. Indeed, define $\tilde{V}$ as $C_V(\sigma)\cap J^\perp$, then for every $v\in V$, $\tilde{v}$ denotes the orthogonal projection of $v$ in $\tilde{V}$. One can prove that $\tilde{\Phi}:=\{\tilde{r}\mid r\in\Phi\setminus \Phi_J\}$ forms a (not necessarily reduced) root system of $\tilde{V}$.
We denote the Weyl group corresponding to $\tilde{\Phi}$ by $\tilde{W}$.

\begin{lemma}\label{le:N1}
Let $n^1\in N^1$, then there exists an element $n\in N(K,L)$ such that $n^1L_J(K,L)=nL_J(K,L)$ where $nT(K,L)$ corresponds to the shortest element $w$ in $wW_J$. Then $w\in C_W(\sigma)\cap\Stab(\Phi_J)$ and $w|_{\tilde{V}}\in \tilde{W}$. The map 
\[\phi:N^1/H^1\to\tilde{W};n^1H^1\mapsto w|_{\tilde{V}}\]
is an isomorphism.
\end{lemma}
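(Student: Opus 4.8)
The plan is to establish Lemma~\ref{le:N1} in three stages: first produce the representative $n$ and verify the Weyl-group membership claims, then construct the map $\phi$ and show it is well-defined, and finally prove it is an isomorphism. For the first stage, given $n^1 \in N^1 \subseteq G^1$, I would use the Bruhat-type decomposition $P_J(K,L) n^1 P_J(K,L) = P_J(K,L) n P_J(K,L)$ with $n \in N(K,L)$ chosen so that $w := nT(K,L)$ is the shortest element in the coset $wW_J$; since $n^1 \in N^1 \subseteq \langle n_0, L_J(K,L)\rangle$ normalizes the relevant structure, the conjugate ${}^{n^1}\!P_J(K,L)$ is $\sigma$-invariant (because $n^1 \in \Fix(\sigma)$ and $\sigma$ permutes parabolics), so Lemma~\ref{Stab} immediately gives $w \in C_W(\sigma) \cap \Stab(\Phi_J)$. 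The equality $n^1 L_J(K,L) = n L_J(K,L)$ follows from unpacking the double-coset equality together with the uniqueness decomposition of Lemma~\ref{mixed}, which forces the $N(K,L)$-part to agree modulo $L_J(K,L)$.

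Next I would address the claim $w|_{\tilde V} \in \tilde W$. Since $w \in \Stab(\Phi_J)$, the reflection space $\tilde V = C_V(\sigma) \cap J^\perp$ is $w$-invariant, so the restriction $w|_{\tilde V}$ is a well-defined orthogonal transformation of $\tilde V$. To see it lies in $\tilde W$, I would use that $w \in C_W(\sigma)$ together with the description of $\tilde\Phi = \{\tilde r \mid r \in \Phi \setminus \Phi_J\}$: a $\sigma$-stable, $\Phi_J$-stabilizing Weyl element should act on the projected root system by a product of reflections $w_{\tilde r}$, which is exactly the statement that $w|_{\tilde V}$ belongs to the Weyl group $\tilde W$ of $\tilde\Phi$. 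This is essentially the ``relative Weyl group'' phenomenon, and I would expect the cleanest route to be to show that every $w_{\tilde r}$ arises from some $w|_{\tilde V}$ and that these generate, invoking the standard theory of the root system obtained from a $\sigma$-action (as in the analogous non-mixed construction in \cite{Steinbach1}).

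For the final stage, I would verify in turn that $\phi$ is well-defined, a homomorphism, injective, and surjective. Well-definedness requires that the assignment $n^1 H^1 \mapsto w|_{\tilde V}$ does not depend on the representative $n^1$ or on the choice of $n$; this reduces to showing that if $n^1 \in H^1 = L_J(K,L) \cap G^1$ then $w \in W_J$ and hence $w|_{\tilde V} = \id$ (since $\Phi_J$ projects to zero in $\tilde V$). The homomorphism property follows from the compatibility of the double-coset product structure with the Weyl group multiplication, using Lemma~\ref{mixed} to control the factorizations. Injectivity of $\phi$ amounts to the reverse: if $w|_{\tilde V} = \id$ then $w$ fixes $\tilde V$ pointwise, forcing $w \in W_J$ and thus $n^1 \in L_J(K,L) \cap G^1 = H^1$.

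I expect the main obstacle to be surjectivity of $\phi$ onto $\tilde W$, together with the precise identification $w|_{\tilde V} \in \tilde W$ in the middle stage. These are the points where one must genuinely produce fixed-point elements of $G^1$ realizing each generator of $\tilde W$, and where condition (3) of the involution (the non-triviality in Section~\ref{Involution}) and the earlier construction of $n_{e_4} \in N^1$ become essential: $n_{e_4}$ is precisely the $\sigma$-fixed element realizing the single generator of the rank-one relative Weyl group, so the surjectivity argument should come down to showing that $\tilde W$ has order two and that $w_{e_4}|_{\tilde V}$ is its non-trivial element. The arithmetic-of-roots bookkeeping (tracking how roots in $\Phi \setminus \Phi_J$ project into $\tilde V$ and verifying the $\sigma$-stability conditions) is routine but delicate, and I would lean on the corresponding computations in \cite{Steinbach1}, adapting them to the mixed setting exactly as the preceding lemmas in this section have done.
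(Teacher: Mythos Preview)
Your proposal is correct and matches the approach the paper takes: the paper's own proof simply states that the argument of \cite[Lemma~2.9]{Steinbach1} carries over almost verbatim to the mixed setting, and your three-stage sketch (invoking Lemma~\ref{Stab} for the membership $w\in C_W(\sigma)\cap\Stab(\Phi_J)$, then checking well-definedness and bijectivity of $\phi$, with surjectivity supplied by the explicit element $n_{e_4}$) is precisely an unpacking of that reference. One small point to tidy: Lemma~\ref{Stab} is stated for the shortest representative of the \emph{double} coset $W_JwW_J$, whereas the present lemma speaks of the shortest element of $wW_J$; once Lemma~\ref{Stab} yields $w\in\Stab(\Phi_J)$ you have $W_Jw=wW_J$ and hence $W_JwW_J=wW_J$, so the two notions coincide and your invocation goes through.
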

\begin{proof}
	Again, the proof of \cite[Lemma 2.9]{Steinbach1} holds almost verbatim.
\end{proof}

In the current setting, we simply have $\tilde V = C_V(\sigma) = J^\perp = \R e_4$.
The set $\Phi\setminus\Phi_J$ consists precisely of those roots that have a non-zero coefficient for $e_4$,
and we find that $\tilde{\Phi}$ is a root system of type $\mathsf{BC}_1$; in particular, the corresponding Weyl group $\tilde{W}$ has order $2$.
From this, we can immediately conclude that $N^1=\langle n_{e_4}\rangle H^1$.

Before we can proceed with the actual proof of the existence of a split saturated BN-pair, we formulate a last important theorem, again inspired by \cite{Steinbach1}.
\begin{theorem}\label{uniquedecomposition}
Every $g\in G^1\setminus B^1$ can be written as 
\[g=uln_{e_4}u'\] 
with $u\in U_J(K,L)\cap\Fix(\sigma)$, $u'\in {U^{-}_{w_{e_4},J}}\cap\Fix(\sigma)$ and $l\in L_J(K,L)\cap \Fix(\sigma)$.
\end{theorem}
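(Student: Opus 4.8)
The plan is to start from the unique decomposition for the full mixed Chevalley group (Lemma~\ref{mixed}) and then show that the $\sigma$-invariance of $g$ forces each of the three factors to be individually fixed by $\sigma$. Concretely, take $g \in G^1 \setminus B^1$. Since $g \in G^1 \subseteq \F_4(K,L)$ and $g \notin B^1 = P_J(K,L) \cap G^1$, the double coset $P_J(K,L) g P_J(K,L)$ is not $P_J(K,L)$ itself. I would first argue that this double coset must equal $P_J(K,L) n_{e_4} P_J(K,L)$: because $g$ is fixed by $\sigma$, the conjugate $^g\!P_J(K,L)$ is $\sigma$-invariant, so Lemma~\ref{Stab} applies and the shortest representative $w$ of the double coset lies in $C_W(\sigma) \cap \Stab(\Phi_J)$. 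Combined with the rank-one computation $\tilde\Phi = \mathsf{BC}_1$ and $N^1 = \langle n_{e_4}\rangle H^1$ established just above, the only nontrivial possibility is $w = w_{e_4}$, i.e.\@ the representative is $n_{e_4}$.

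With the double coset pinned down, Lemma~\ref{mixed} (applied with $n = n_{e_4}$, noting $n_{e_4} T(K,L) = w_{e_4} \in \Stab(\Phi_J)$) gives a \emph{unique} decomposition $g = u l n_{e_4} u'$ with $u \in U_J(K,L)$, $l \in L_J(K,L)$, and $u' \in U_{w_{e_4},J}^-$. The heart of the argument is then to promote each factor into $\Fix(\sigma)$. Here I would apply $\sigma$ to the whole equation: since $\sigma(g) = g$ and $\sigma(n_{e_4}) = n_{e_4}$ (the latter because $n_{e_4} = u_{e_4}(1)u_{-e_4}(-1)u_{e_4}(1)$ with each factor fixed, as shown in the preceding lemma), we obtain
\[
u l n_{e_4} u' = \sigma(u)\,\sigma(l)\,n_{e_4}\,\sigma(u') .
\]
Now $\sigma$ permutes root groups and stabilizes $\Phi_J$, so $\sigma(u) \in U_J(K,L)$, $\sigma(l) \in L_J(K,L)$, and $\sigma(u') \in U_{w_{e_4},J}^-$; that is, the right-hand side is again a decomposition of $g$ of exactly the shape demanded by Lemma~\ref{mixed}. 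By the uniqueness clause of that lemma we conclude $\sigma(u) = u$, $\sigma(l) = l$, and $\sigma(u') = u'$, which is precisely $u \in U_J(K,L) \cap \Fix(\sigma)$, $l \in L_J(K,L) \cap \Fix(\sigma)$, and $u' \in U_{w_{e_4},J}^- \cap \Fix(\sigma)$.

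The step I expect to require the most care is verifying that $\sigma$ genuinely sends the factor subgroups to themselves as claimed, so that the image decomposition is of the correct type. For $U_J(K,L)$ and $L_J(K,L)$ this follows from $\sigma(\Phi_J) = \Phi_J$ together with condition~(1) on~$\sigma$ (it permutes root groups), but for $U_{w_{e_4},J}^-$ one must check that the defining condition $r \in \Phi^+ \setminus \Phi_J$ with $w_{e_4}(r) \in \Phi^-$ is preserved under $r \mapsto \sigma(r)$; this amounts to the commutation of $\sigma$ with $w_{e_4}$ on the relevant roots, which is exactly what $w_{e_4} \in C_W(\sigma)$ provides. Once this bookkeeping is in place, the conclusion is immediate from the uniqueness in Lemma~\ref{mixed}, so no separate calculation is needed.
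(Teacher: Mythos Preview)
Your argument is correct and is essentially the same approach as the paper's, which simply cites \cite[Theorem 2.10]{Steinbach1} together with Lemma~\ref{Stab} and Lemma~\ref{le:N1}; you have spelled out exactly the Steinbach-style argument those references encode (unique decomposition from Lemma~\ref{mixed}, then $\sigma$-invariance of each factor by uniqueness). One small remark: the identification of the nontrivial representative as $w_{e_4}$ is most cleanly obtained via Lemma~\ref{Product} (giving $w = w_0^J w_0 = w_{e_4}$) rather than via the $\mathsf{BC}_1$ computation, though both routes are valid.
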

\begin{proof}
	Using Lemma~\ref{Stab} and Lemma~\ref{le:N1}, the proof can be taken over from \cite[Theorem 2.10]{Steinbach1}.
\end{proof}

We now have enough information to prove the existence of a split saturated BN-pair.
\begin{theorem} $G^1$ together with $(B^1,N^1)$ forms a saturated, split BN-pair of rank one.
\end{theorem}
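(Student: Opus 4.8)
The plan is to verify the five axioms (i)--(v) of a saturated split BN-pair of rank one from the definition in Section~\ref{se:setup}, using $B^1 = P_J(K,L) \cap G^1$, $H^1 = L_J(K,L) \cap G^1$, and $N^1 = \langle n_{e_4} \rangle H^1$, with $\omega = n_{e_4}$. The crucial structural input is Theorem~\ref{uniquedecomposition}, which tells us that every $g \in G^1 \setminus B^1$ decomposes as $g = u\, l\, n_{e_4}\, u'$ with $u \in U_J(K,L) \cap \Fix(\sigma)$, $l \in L_J(K,L) \cap \Fix(\sigma) = H^1$, and $u' \in U_{w_{e_4},J}^- \cap \Fix(\sigma)$. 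Combined with the fact (from Lemma~\ref{le:N1} and the subsequent $\mathsf{BC}_1$ computation) that $N^1/H^1 \cong \tilde W$ has order~$2$, this decomposition is what will drive the Bruhat-type axiom.

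First I would record the subgroup relations. The element $n_{e_4} = u_{e_4}(1)u_{-e_4}(-1)u_{e_4}(1)$ lies in $G^1$ and normalizes $H^1$, and $n_{e_4}^2 = h_{e_4}(-1) \in T(K,L)$ lies in $H^1 = L_J(K,L) \cap G^1$; this establishes the involution condition $\omega^2 \in H$ in axiom~(iii). For axiom~(ii), $H^1 = B^1 \cap N^1$ is normal in $N^1$: indeed $H^1 \unlhd N^1$ because $N^1/H^1 \cong \tilde W \cong \Z/2\Z$ by Lemma~\ref{le:N1}, and $B^1 \cap N^1 = (P_J(K,L) \cap G^1) \cap (\langle n_{e_4}\rangle H^1)$, which equals $H^1$ since $n_{e_4} \notin P_J(K,L)$ (its image $w_{e_4}$ is nontrivial in $W$ and lies outside $\Stab(\Phi_J)$ applied to the relevant positive roots). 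Axiom~(i), that $G^1 = \langle B^1, N^1 \rangle$, follows from Theorem~\ref{uniquedecomposition}: any $g \in G^1$ is either in $B^1$ or can be written as $u\, l\, n_{e_4}\, u'$, and each factor lies in $\langle B^1, N^1 \rangle$ since $u, l, u' \in B^1$ (as $U_J(K,L)$, $H^1$, and $U_{w_{e_4},J}^-$ all sit inside $P_J(K,L) \cap G^1$ after using that $u'$ is fixed by $\sigma$) and $n_{e_4} \in N^1$.

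Next I would establish axiom~(iii) more fully, namely the covering $G^1 = B^1 \cup B^1 \omega B^1$ together with $\omega B^1 \omega \neq B^1$. The decomposition of Theorem~\ref{uniquedecomposition} gives exactly $G^1 \setminus B^1 = B^1 n_{e_4} B^1$: writing $g = u\, l\, n_{e_4}\, u'$, the product $u\,l$ lies in $B^1$ and $u'$ lies in $B^1$, so $g \in B^1 n_{e_4} B^1$; conversely any such double coset is disjoint from $B^1$ because its elements have nontrivial image $w_{e_4}$ in $\tilde W$. This simultaneously yields $N^1 = \langle H^1, n_{e_4}\rangle$, completing axiom~(iii). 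The inequality $n_{e_4} B^1 n_{e_4} \neq B^1$ holds because conjugating $B^1$ by $n_{e_4}$ moves the positive root group structure to the opposite one: $n_{e_4}$ sends $U_{e_4}$ to $U_{-e_4}$, and $u_{-e_4}(1) \in \Fix(\sigma) \cap V^1 \subseteq n_{e_4} B^1 n_{e_4}$ is not in $B^1$, using condition~(3) on $\sigma$ from page~\pageref{sigma}.

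Finally I would verify the \emph{split} and \emph{saturated} conditions. For splitness (axiom~(iv)), I would set $U = V^1 = U_J^- \cap \Fix(\sigma)$ and show $B^1 = V^1 \rtimes H^1$: the decomposition with trivial Weyl part shows $B^1 = (U_J(K,L) \cap \Fix(\sigma)) \cdot H^1$, and I would identify $U_J(K,L) \cap \Fix(\sigma)$ as the required normal complement, with $U^1$ and $V^1$ being the two opposite root groups of the resulting rank-one structure (here it matters which of $U_J \cap \Fix(\sigma)$ and $U_J^- \cap \Fix(\sigma)$ we call $U$; conjugating by $n_{e_4}$ interchanges them). Normality of $U$ in $B^1$ follows since $H^1 \leq L_J(K,L)$ normalizes each relevant root group, and $U \cap H^1 = 1$ because $\Fix(\sigma)$ respects the unipotent--torus decomposition inherited from $P_J(K,L)$. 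For saturation (axiom~(v)), $H^1 = B^1 \cap (B^1)^{n_{e_4}}$: the inclusion $\supseteq$ amounts to showing that an element fixed by conjugation-compatible data lies in the torus part, which follows from the uniqueness clause of the decomposition in Theorem~\ref{uniquedecomposition} (an element in both $B^1$ and its $n_{e_4}$-conjugate has trivial unipotent parts on both sides). The main obstacle I anticipate is bookkeeping the splitness decomposition $B^1 = U \rtimes H^1$ correctly, that is, confirming that $\Fix(\sigma)$ interacts cleanly with the semidirect product $P_J(K,L) = U_J(K,L) \rtimes (\text{torus part of }L_J(K,L))$ so that taking $\sigma$-fixed points preserves the factorization; this is exactly where conditions~(1)--(3) on $\sigma$ and Lemma~\ref{le:sigma2} are needed, and it is the one place where a genuine argument rather than a formal deduction from the earlier lemmas is required.
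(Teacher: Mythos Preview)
Your approach is essentially the same as the paper's: verify axioms (i)--(v) directly, using Theorem~\ref{uniquedecomposition} for the Bruhat covering and the $\sigma$-stability of the Levi decomposition $P_J(K,L) = U_J(K,L) \rtimes L_J(K,L)$ for splitness. One slip to fix: in axiom~(iv) you write ``set $U = V^1 = U_J^- \cap \Fix(\sigma)$ and show $B^1 = V^1 \rtimes H^1$,'' but $V^1 \not\subseteq B^1$ (it consists of negative root elements); the correct normal complement is $U^1 = U_J(K,L) \cap \Fix(\sigma)$, which you in fact use in the very next sentence, so this is a notational lapse rather than a gap. Also, for splitness only condition~(1) on $\sigma$ (permuting root groups, hence stabilizing $U_J(K,L)$ and $L_J(K,L)$) is needed---Lemma~\ref{le:sigma2} enters earlier, through Lemma~\ref{Stab} and hence Theorem~\ref{uniquedecomposition}, not here.
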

\begin{proof}
We check that all five conditions are satisfied. 
\begin{compactenum}[(i)]
    \item
	We show that $G^1=\langle B^1,N^1\rangle$. This follows immediately from Theorem~\ref{uniquedecomposition}.
    \item
    We first prove that $H^1=B^1 \cap N^1$. It is easy to see that $H^1\subseteq B^1 \cap N^1$. If on the other hand $n_{e_4}h\in B^1$ for some $h\in H^1$, this would imply that $n_{e_4}\in P_J(K,L)$, a contradiction. From this it follows immediately that $H^1\unlhd N^1$ since $|\tilde{W}|=2$ and therefore $[N^1:H^1]=2$.
        \item
        The element $\omega:=n_{e_4}\in N^1\setminus H^1$ with $n_{e_4}^2=e$ such that $N^{1}=\langle H^1,n_{e_4}\rangle$. We also have $G^1=B^1\cup B^1n_{e_4} B^1$ since $N^1=n_{e_4}H^{1}\cup H^1$ and $n_{e_4} B^1n_{e_4}\neq B^1$ because $x_{-e_4}(1)\notin B^1\subseteq P_J$ .
        \item The group $U^1\unlhd B^1$ since $U_J(K,L)\cap\Fix(\sigma)\unlhd P_J(K,L)\cap G^1$. As $B^1\subseteq P_J(K,L)$ with $P_J(K,L)=U_J(K,L)\rtimes L_J(K,L)$, we find that every $b\in B^1$ can be written uniquely as $b=ul$ for some $u\in U_J(K,L)$ and $l\in L_J(K,L)$. Now $\sigma$ fixes $U_J(K,L)$ and $L_J(K,L)$ which means that $u,l$ have to fixed by $\sigma$ as well, so we get that $u\in U^1$ and $l\in H^1$. This shows that $B^1=U^1\rtimes H^1$.
                \item Because $H^1\unlhd N^1$, we obtain that $n_{e_4}H^1n_{e_4}=H^1$, so $H^1\subseteq B^1\cap n_{e_4}B^1n_{e_4}$.                
                Remains to check the direction $B^1\cap (n_{e_4}B^{1}n_{e_4})\subseteq H^1$. We have that $x\in B^1\cap (n_{e_4}B^{1}n_{e_4})\subseteq B^1$ so the only thing left to check is that $x$ belongs to $N^1$. Since $x$ is in the intersection of the above groups, we can write $x=b_1=n_{e_4}b_2n_{e_4}=n_{e_4}uhn_{e_4}=n_{e_4}un_{e_4}h'$ for certain $b_1,b_2 \in B^1$, $u\in U^1$ and $h,h'\in H^1$. This implies $b_1h'^{-1}\in V^1\cap B^1=\{e\}$ or $b_1=h'\in H^1$.\qedhere
\end{compactenum}\end{proof}

\section{The Moufang set of mixed type $\F_4$}\label{se:mixedF4}

From Lemma \ref{BN-pair} we find that the set $X:=\{(U^1)^g\mid g\in G^1\}$ together with the set of subgroups $\{(U^1)^g\mid g\in G^1\}$ acting on $X$ by conjugation, forms a Moufang set $\mouf=(X, (V_x)_{x\in X})$.

In general, every Moufang set of the form $\mouf=(X,(U_x)_{x\in X})$ can be written as $\mouf(U,\tau)$ for some group $U$ and some permutation $\tau$ on $X$ using Lemma \ref{Utau}.
We use this lemma to find a representation of our Moufang set in terms of a group $(U,+)$ and a permutation $\tau$. 

We choose $\infty:=U^1$ and $0:={(U^1)}^{n_{e_4}}=V^1$, and we define
\[ U := \{{(U^1)}^g\mid g\in G^1\}\setminus \{U^1\} . \]
Notice that every $g\in G^1\setminus B^1$ can be written in a unique way as $g=bn_{e_4}u$ with $b\in B^1$ and $u\in U^1$;
equivalently, for any two elements $g=bn_{e_4}u$ and $g'=b'n_{e_4}u'$ with $b,b'\in B^1,\ u,u'\in U^1$,
we have ${(U^1)}^g={(U^1)}^{g'}$ if and only if $u=u'$.
Therefore, the map
\[
\zeta \colon U^1 \to U \colon u \mapsto (U^1)^{n_{e_4}u}
\]
is a bijection.
In particular, this makes $U$ into a group which is isomorphic to $U^1$.
Finally, we can set $\tau:=n_{e_4}$, which acts on the set $U^*$ of non-trivial elements of $U$ by conjugation.

We conclude that the corresponding Moufang set $\mouf$ is given by $\mouf = \mouf(U,n_{e_4})$.
In section~\ref{ss:U1}, we will determine the group $U$;
in section~\ref{ss:tau}, we will determine the action of $\tau$ on $U^*$.

\subsection{Description of the group $U$}\label{ss:U1}

We determine what an arbitrary element of $U^1$ looks like, 
and we will describe the group structure of $U \cong U^1$.

Since $U^1=U_J\cap \Fix(\sigma)$, we find
\begin{align*}
 U^1:=U_{r_1}U_{r_2}\dotsm U_{r_{15}} \cap \Fix_{\F_4(K,L)}{(\sigma)},
\end{align*} 
with
\begin{align*}
\begin{array}{lll}
r_1=e_4&\\
r_2=e_1+e_4 && r_3=-e_1+e_4\\
r_4=e_2+e_4 && r_5=-e_2+e_4\\
r_6=e_3+e_4 && r_7=-e_3+e_4\\
r_8=\frac{1}{2}(e_1+e_2-e_3+e_4) && r_9=\frac{1}{2}(-e_1-e_2+e_3+e_4)\\
r_{10}=\frac{1}{2}(e_1-e_2+e_3+e_4) && r_{11}=\frac{1}{2}(-e_1+e_2-e_3+e_4)\\
r_{12}=\frac{1}{2}(-e_1+e_2+e_3+e_4) && r_{13}=\frac{1}{2}(e_1-e_2+e_3+e_4)\\
r_{14}=\frac{1}{2}(-e_1-e_2-e_3+e_4) && r_{15}=\frac{1}{2}(e_1+e_2+e_3+e_4),
\end{array}
\end{align*}
$U_r=\{ u_r(t)\mid t\in L\}$ if $r\in\Phi_s$ and $U_r=\{ u_r(t)\mid t\in K\}$ if $r\in\Phi_l$.\\
This implies that an arbitrary element $x$ of $U^{1}$ is of the form
\begin{align*}
u_{r_1}(t_1)u_{r_2}(t_2')u_{r_3}{(t_3')}\dotsm u_{r_7}({t_7'})u_{r_8}(t_8)u_{r_9}(t_9)\dotsm u_{r_{15}}(t_{15})
\end{align*}
with $t_i\in L$ and $t_j'\in K$ for all $i,j$ and satisfies the relation $x=\sigma(x)$.

After rearranging some factors, using the commutator relations in \eqref{eq:comm2}, we find that
\begin{align*}
\sigma(x)=u_{r_1}(c_1\overline{t_1}+\overline{t_8t_9}+\overline{t_{10}t_{11}}+\overline{t_{12}t_{13}}+\overline{t_{14}t_{15}})u_{r_2}(c_3\overline{t'_3})\dotsm u_{r_{15}}(c_{14}\overline{t_{14}})
\end{align*}

We now determine the values for each $c_{r_i}$, using the already known values for $c_{\alpha_1},\dots,c_{\alpha_4}$ from paragraph~\ref{ss:cr} and the product formula. We get
\begin{align*}
\begin{array}{lll}
c_{r_1}=1&&\\
c_{r_2}=\alpha && c_{r_3}=\alpha^{-1}\\
c_{r_4}=\beta && c_{r_5}=\beta^{-1}\\
c_{r_6}=\alpha\beta && c_{r_7}=\alpha^{-1}\beta^{-1}\\
c_{r_8}=1 && c_{r_9}=1\\
c_{r_{10}}=\alpha && c_{r_{11}}=\alpha^{-1}\\
c_{r_{12}}= \beta && c_{r_{13}}=\beta^{-1}\\
c_{r_{14}}=\alpha^{-1}\beta^{-1} && c_{r_{15}}=\alpha\beta.
\end{array}
\end{align*}
and so the relation $x=\sigma(x)$ implies the following relations:
\begin{gather*}
t_1+\overline{t_1}+\overline{t_8}\cdot\overline{t_9}+\overline{t_{10}}\cdot\overline{t_{11}}+\overline{t_{12}}\cdot\overline{t_{13}}+\overline{t_{14}}\cdot\overline{t_{15}}=0,\\
t_3'=\alpha\overline{t_2'},\  t_5'=\beta\overline{t_4'},\ t_7'=\alpha\beta\overline{t_6'} , \\
t_9=\overline{t_8},\ t_{11}=\alpha\overline{t_{10}}, \ t_{13}=\beta\overline{t_{12}},\ t_{14}=\alpha\beta\overline{t_{15}}.
\end{gather*}
Replacing $t'_3,\, t'_5,\, t'_7,\, t_9,\, t_{11},\, t_{13}$ and $t_{14}$ shows that an arbitrary element $x$ of $U^1$ is therefore of the form
\begin{align*}
u_{r_1}(t_1)u_{r_2}(t'_2)u_{r_3}(\alpha \overline{t'_2})\dotsm u_{r_{14}}(\alpha\beta \overline{t_{15}})u_{r_{15}}(t_{15})
\end{align*}
with $t_1+\overline{t_1}+t_8\overline{t_8}+\alpha t_{10}\overline{t_{10}}+\beta t_{12}\overline{t_{12}}+\alpha\beta t_{15}\overline{t_{15}}=0$.

Now let $x,y \in U^1$ be arbitrary, and write
\begin{align*}
	x &= u_{r_1}(t_1)u_{r_2}(t_2')u_{r_3}(t_3')\dotsm u_{r_7}({t_7'})u_{r_8}(t_8)u_{r_9}(t_9)\dotsm u_{r_{15}}(t_{15}), \\
	y &= u_{r_1}(s_1)u_{r_2}(s_2')u_{r_3}(s_3')\dotsm u_{r_7}({s_7'})u_{r_8}(s_8)u_{r_9}(s_9)\dotsm u_{r_{15}}(s_{15});
\end{align*}
then the product $x\cdot y$ is equal to
\begin{multline*}
	u_{r_1}(t_1+s_1+\overline{t_8}s_8+\alpha\overline{t_{10}}s_{10}+\beta\overline{t_{12}}s_{12}+\alpha\beta t_{15}\overline{s_{15}}) \\
	\cdot u_{r_2}(t_2'+s_2')\cdot u_{r_3}(t'_3+s'_3)\dotsm u_{r_{15}}({t_{15}+s_{15}}).
\end{multline*}
To simplify the notation, we will
identify an arbitrary element
\[ x=u_{r_1}(t_1)u_{r_2}(t_2')u_{r_3}(t_3')\dotsm u_{r_7}({t_7'})u_{r_8}(t_8)u_{r_9}(t_9)\dotsm u_{r_{15}}(t_{15}) \]
with the element
\[ ((t_8,t_{10},t_{12},\overline{t_{15}}),(t_1,t'_2,t_4',t'_6)) \in (L\oplus L\oplus L\oplus L)\oplus (L\oplus K\oplus K\oplus K). \]
We conclude that $U^1$ is isomorphic to the group $(U,+)$ with
\begin{multline}\label{eq:U+}
U := \bigl\{ ((x_1,x_2,x_3,x_4),(y_1,y_2,y_3,y_4))\in (L^4) \oplus (L \oplus K^3) \mid \\ 
x_1\overline{x_1}+\alpha x_2\overline{x_2}+\beta x_3\overline{x_3}+\alpha\beta x_4\overline{x_4}+\tr(y_1)=0 \bigr\}, 
\end{multline}
where the group addition $+$ is given by the formula
\begin{multline*}
\bigl( (x_1,x_2,x_3,x_4),(y_1,y_2,y_3,y_4) \bigr) + \bigl( (a_1,a_2,a_3,a_4),(b_1,b_2,b_3,b_4) \bigr) \\
	\shoveleft = \bigl( (x_1+a_1,x_2+a_2,x_3+a_3,x_4+a_4), \\
	(y_1+b_1+\overline{x_1}a_1+\alpha\overline{x_2}a_2+\beta \overline{x_3}a_3+\alpha\beta \overline{x_4}a_4, y_2+b_2, y_3+b_3,\ y_4+b_4) \bigr).
\end{multline*}

\subsection{Description of the action of $\tau$ on $U^*$}\label{ss:tau}

We present a way to calculate the image of $\tau=n_{e_4}$ on an arbitrary non-trivial element $u\in U^1$.
Using the isomorphism with $U$, we need to determine which element of $U^1$ corresponds to the element ${(U^1)}^{n_{e_4}un_{e_4}}$ of $U$.
As mentioned in the previous section, this comes down to rewriting an arbitrary element of the form $g=n_{e_4}un_{e_4}$ with $u\in U^1$ as $g=bn_{e_4}u'$ with $b\in B^1$ and $u'\in U^1$. We have shown in the previous section that this can be done in a unique way and we therefore have $\tau(u)=u'$. 

Bringing such an arbitrary element $n_{e_4}un_{e_4}$ into the right form comes down to quite long, but systematic calculations,
as we will explain below.
We implemented our algorithm in the computer algebra software package {\tt Sage} \cite{sage}.
We refer to \cite{sagecode} for the detailed implementation and the output of the program.

We briefly describe the methods we use to rewrite $n_{e_4}un_{e_4}$ as $bn_{e_4}u'$ for some $b\in B^1$ and $u'\in U^1$.
Suppose $u=x_{r_1}(t_1)x_{r_2}(t_2)\dotsm x_{r_{14}}(t_{14})x_{r_{15}}(t_{15})$, then using the relations
\[n_sx_r(t)n_s=x_{w_s(r)}(t) \quad \text{and} \quad n_{r}(t)=h_r(t)n_r \]
for all $r,s\in \Phi$ and all $t\in L$,
we find that
\begin{multline*} n_{e_4}x_{r_1}(t_1)x_{r_2}(t_2)x_{r_3}(t_3)\dotsm x_{r_{14}}({t_{14}})x_{r_{15}}(t_{15})n_{e_4}\\
=x_{-r_1}(t_1)x_{-r_3}(t_2)x_{-r_2}(t_3)\dotsm x_{-r_{15}}(t_{14})x_{-r_{14}}(t_{15})\end{multline*}
or that this element has the same action on $U^1$ as the element
\[x=n_{e_4}(t_1)x_{r_1}({t_1}^{-1})x_{-r_3}(t_2)x_{-r_2}(t_3)\dotsm  x_{-r_{15}}(t_{14})x_{-r_{14}}(t_{15}),\]
provided $t_1\neq 0$.

\begin{remark}
If $t_1=0$, then the norm condition implies that $t_8,\dots, t_{15}$ are also equal to zero.
Again, we have to distinguish between $t_2 \neq 0$ and $t_2 = 0$ to proceed,
and a similar distinction has to be made for $t_4$ and $t_6$,
but in each case, the process is very similar (but gets easier as more and more elements become zero), and we omit the details.
\end{remark}

We now describe our algorithm to rewrite $x$ in the required form, i.e.\@ in the form
\[ x = b n_{e_4} u' \]
with $b \in B^1$ and $u' \in U^1$.
Our strategy is to try to swap all ``bad root elements'', i.e. all root elements $x_r(t)$ not belonging to $U^1$, and all
``Hua elements'' $h_r(t)$, in $x$, from the right side of $n_{e_4}$ to the left, in such a manner that the elements that we get at the left of $n_{e_4}$ all belong to $P_J$.
At the end, we will then indeed have rewritten $x$ as $b n_{e_4} u'$.
As $n_{e_4}$ and $u'$ are in $G^1$ at the end of the algorithm, so is $b$, and therefore $b\in B^1$ as required.

\paragraph*{Step 1.}

We always start with the leftmost element on the right side of~$n_{e_4}$.
This element will be of one of the following types:
\begin{compactenum}[(1)]
\item  a Hua element $h_r(t)$,
\item  a root element $x_r(t)$, with $r$ containing no term in $e_4$,
\item  a root element $x_r(t)$, with $r$ containing a negative term in $e_4$,
\item a root element $x_r(t)$, with $r$ containing a positive term in $e_4$.
\end{compactenum}
We point out that elements of the the first three types can be swapped to the left side of $n_{e_4}$ without any problem. 
Only if we encounter an element $x_r(t)=x_{r_i}(t)$ of the fourth type, we have a look at the element next to $x_{r_i}(t)$. 

\paragraph*{Step 2.}

Depending on the form of this second element, we can distinguish a few cases.
\begin{compactenum}[(a)]
    \item
	This second element is a Hua element $h_s(t')$.
	In this case, we can use the conjugation relation
	\[ x_{r}(t)h_s(t')=h_s(t')x_{r}\bigl(t\lambda^{-2\langle r,s\rangle/\langle s,s\rangle}\bigr) \]
	to reverse the order of $x_{r}(t)$ and $h_s(t')$ in the product.
    \item
	This second element is of the form $x_r(t')$.
	In this case we simply combine both elements to the single root element $x_r(t + t')$.
    \item
	This second element is of the form $x_s(t')$ with either
	\begin{compactenum}[(i)]
	\item  $s=r_j$, with $j>i$, or
	\item  $s=r_j$, with $j<i$, or
	\item  $s$ contains no positive term in $e_4$.
	\end{compactenum}
	In case (i), there is nothing to do, and we proceed to the next element in the product,
	i.e.\@ the element $x_{r_j}(t')$ now plays the role of $x_r(t)$, and we apply Step 2 on this element.
	In cases (ii) and (iii), we distinguish between the case $s = -r$ (i.e.\@ $s$ and $r$ are opposite roots) and $s \neq -r$.
	If $s \neq -r$, we use the commutator relations to switch the roots $x_{r}(t)$ and $x_s(t')$,
	and we add the possible new element(s) to the right of $x_{s}(t') x_{r}(t)$.
	If on the other hand $s=-r$, we use the equality
	\[ x_{r}(t)x_{-r}(t') = x_{-r}\Bigl(\frac{t'}{tt'+1}\Bigr)\,h_{r}(tt'+1)\,x_{r}\Bigl(\frac{t}{tt'+1}\Bigr) \]
	(when $tt' \neq -1$) to proceed.
	In both cases, we then return to step 1.
	%
\end{compactenum}
By repeating these steps, we end up with an element in $U^1$ on the right side of $n_{e_4}$, as we wanted. 

Applying the algorithm on an arbitrary element of $U^1$ we get, using our {\tt Sage} program \cite{sagecode}, that
the corresponding map $\tau \colon U^1 \to U^1$ (which maps $u$ to $u'$ as explained in the beginning of this section),
expressed in terms of the isomorphic group $(U,+)$ as in~\eqref{eq:U+} on page~\pageref{eq:U+},
is explicitly given by
\begin{multline*}
\tau \colon U \to U  \colon \\ (a,b)\mapsto \bigl( a \cdot \bigl( b+f(a) \bigr)^{-1},
	\bigl( b+f(a) \bigr)^{-1} + f\bigl( a\cdot (b+f(a))^{-1} \bigr) \bigr) ,
\end{multline*}
where
\begin{multline*}
f \colon L\oplus L\oplus L\oplus L\to L\oplus L\oplus L\oplus L \colon \\
	\shoveleft (a_1,a_2,a_3,a_4) \mapsto
		( a_1\overline{a_1}+\alpha a_2\overline{a_2}+\beta a_3\overline{a_3}+\alpha\beta a_4\overline{a_4}, \\
	a_1a_2 + \beta\overline{a_3}a_4, \; a_1a_3+\alpha\overline{a_2}a_4, \; a_2a_3+\overline{a_1}a_4 ),
\end{multline*}
and where the multiplication of the elements in $L^4$ is the octonion multiplication described in Remark~\ref{re:octmult}.

\subsection{Conclusion}

We now summarize our results.
\begin{theorem}\label{th:main}
	Let $(k,\ell)$ be a pair of fields of characteristic $2$ such that $\ell^2 \leq k \leq \ell$.
	Let $\oct$ be an octonion division algebra over $k$,
	with norm $\N$, and let $\oct_\ell = \oct \otimes_k \ell$, with norm $\N_\ell$.
	Let $K$ and $L$ be separable quadratic field extensions of $k$ and $\ell$, respectively,
	such that $L^2 \leq K \leq L \leq \oct_\ell$,
	and identify $\oct_\ell$ with $L^4$.
	Under this identification, we define a subspace $\oct_\mixed := L \oplus K^3$ of $\oct_\ell$.
	Assume that the restriction of $\N_\ell$ to $\oct_\mixed$ is anisotropic.

	There exist constants $\alpha,\beta \in k^\times$ such that the norm $\N$ is given by
	\[ \N \colon \oct_\ell = L^4 \to K \colon (a_1,a_2,a_3,a_4) \mapsto
		a_1\overline{a_1}+\alpha a_2\overline{a_2}+\beta a_3\overline{a_3}+\alpha\beta a_4\overline{a_4} . \]
	Let
	\begin{multline*}
	f \colon \oct_\ell \to \oct_\ell \colon
		(a_1,a_2,a_3,a_4) \mapsto
			\bigl( \N(a_1,a_2,a_3,a_4), \\
		a_1a_2 + \beta\overline{a_3}a_4, \; a_1a_3+\alpha\overline{a_2}a_4, \; a_2a_3+\overline{a_1}a_4 \bigr) ,
	\end{multline*}
	and let
	\begin{multline*}
	g \colon \oct_\ell \times \oct_\ell \to L \colon
		\bigl( (a_1,a_2,a_3,a_4), (b_1,b_2,b_3,b_4) \bigr) \mapsto \\
		\overline{a_1}b_1 + \alpha\overline{a_2}b_2 + \beta \overline{a_3}b_3 + \alpha\beta \overline{a_4}b_4.
	\end{multline*}
	Define
	\[
	U := \bigl\{ (a,b) \in \oct_\ell \oplus \oct_\mixed \mid \N(a) + \tr(b)=0 \bigr\} ,
	\]
	and make $U$ into a group by setting
	\[ (a,b) + (c,d) := (a+c, b+d+g(a,c)) \]
	for all $(a,b), (c,d) \in U$.
	Define a permutation $\tau$ on $U^*$ by
	\[
	\tau(a,b) := \bigl( a \cdot \bigl( b+f(a) \bigr)^{-1},
		\bigl( b+f(a) \bigr)^{-1} + f\bigl( a\cdot (b+f(a))^{-1} \bigr) \bigr)
	\]
	for all $(a,b) \in U$.
	Then $\mouf(U, \tau)$ is a Moufang set corresponding to a mixed group of type $\F_4$.
\end{theorem}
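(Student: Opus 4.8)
The plan is to recognize that Theorem~\ref{th:main} is a clean repackaging of the construction carried out in Sections~\ref{se:BN} and~\ref{se:mixedF4}, so the proof amounts to matching the abstract input data $(\oct, K, L, \oct_\mixed)$ to the concrete setup of that construction and then reading off the already-computed descriptions of $U$ and $\tau$. Concretely, by Lemma~\ref{BN-pair} the group $G^1 = \langle U^1, V^1\rangle$ built from the involution $\sigma$ carries a Moufang set, and by Lemma~\ref{Utau} (together with the discussion at the start of Section~\ref{se:mixedF4}) this Moufang set equals $\mouf(U^1, n_{e_4})$. So it suffices to verify that, under the hypotheses of the theorem, this data is isomorphic to the stated $\mouf(U, \tau)$.

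First I would produce the concrete input. Since $\oct$ is an octonion division algebra over $k$, its norm $\N$ is an anisotropic $3$-fold Pfister form; writing $\oct_\ell = L^4$ via the Cayley--Dickson description of Remark~\ref{re:octmult} and diagonalizing yields constants $\alpha,\beta \in k^\times$ together with exactly the norm formula in the statement. The hypothesis that $\N_\ell$ restricts anisotropically to $\oct_\mixed = L \oplus K^3$ is, on the trace-zero part, precisely the anisotropy of the mixed form $q$ that was used in Lemma~\ref{le:sigma2} to verify condition~(2) for $\sigma$; hence the involution $\sigma$ of Sections~\ref{Involution}--\ref{multiplication}, with the coefficients $c_r$ computed in~\ref{ss:cr}, exists and gives a split saturated BN-pair of rank one on $G^1$.

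Then I would read off the two pieces of data. The group $U^1 = U_J \cap \Fix(\sigma)$ was shown in Section~\ref{ss:U1} to be isomorphic to the group $(U,+)$ of~\eqref{eq:U+}; identifying $a = (x_1,\dots,x_4) \in \oct_\ell$ and $b = (y_1,\dots,y_4) \in \oct_\mixed$, recognizing $\N(a)$ in the defining relation and $\tr(b) = \tr(y_1)$, and comparing the addition formula with the bilinear map $g$ of the statement, shows this is exactly the $U$ of the theorem. Likewise, $\tau = n_{e_4}$ acts by the formula determined in Section~\ref{ss:tau}, which is verbatim the stated $\tau$ once the auxiliary map $f$ and the octonion product of Remark~\ref{re:octmult} are inserted. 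This identifies $\mouf(U,\tau)$ with the Moufang set arising from the mixed group $\F_4(K,L)$, as claimed.

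The main obstacle is not in this bookkeeping but in the identification feeding into it: confirming that the map $f$ obtained from the (Sage-assisted) rewriting of $n_{e_4} u n_{e_4}$ into the form $b\, n_{e_4}\, u'$ is genuinely the intrinsic octonion-theoretic map $(a_1,a_2,a_3,a_4)\mapsto\bigl(\N(a),\, a_1a_2+\beta\overline{a_3}a_4,\, a_1a_3+\alpha\overline{a_2}a_4,\, a_2a_3+\overline{a_1}a_4\bigr)$, compatible with the norm $\N$ and the product of Remark~\ref{re:octmult}. Verifying that these three ingredients --- the diagonalized norm, the Cayley--Dickson product, and the computed $f$ --- are mutually coherent is the conceptual heart of the theorem; once this coherence is in place, the statement follows by assembling the cited results.
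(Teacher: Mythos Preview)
Your proposal is correct and matches the paper's own treatment: Theorem~\ref{th:main} is explicitly introduced as ``We now summarize our results'' and carries no separate proof in the paper, so the argument is exactly the bookkeeping you describe --- identifying the abstract data $(\oct,K,L,\oct_\mixed,\alpha,\beta)$ with the concrete setup of Sections~\ref{se:BN}--\ref{se:mixedF4}, and then quoting the computed $(U,+)$ from~\eqref{eq:U+} and the $\tau$-formula from Section~\ref{ss:tau}. Your remark about the ``main obstacle'' (the coherence of the Sage-computed $f$ with the octonion multiplication of Remark~\ref{re:octmult}) is fair, but the paper simply defers this to the computer output~\cite{sagecode} rather than verifying it by hand.
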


\section{Algebraic Moufang sets of type $\F_4$ in characteristic $2$}\label{se:alg}

We conclude with having a closer look at what happens to the above results when $K=L$, 
which is in fact the algebraic case in characteristic $2$.
More precisely, we show that every Moufang set $\mouf(U,\tau)$ we obtained in section~\ref{se:mixedF4} in this case is isomorphic to an algebraic Moufang set of type $\F_4$ (as we expect).

To see this, we apply the transformation $\varphi$ on
$U = \bigl\{ (a,b) \in \oct_\ell \oplus \oct_\ell \mid \N(a) + \tr(b)=0 \bigr\}$
with
\begin{align*}
\varphi \colon U\to U \colon (a,b)\mapsto (a,b+f(a))
\end{align*}
Then $\varphi((a,b)+(c,d))=\varphi(a,b)\ \tilde{+}\ \varphi(c,d)$ with
\begin{align*}
(x_1,y_1)\ \tilde{+}\ (x_2,y_2)=(x_1+x_2, y_1+y_2+\overline{x_2}\cdot x_1)
\end{align*}
for all $(x_1,y_1),\ (x_2,y_2)\in U$. Furthermore, $\varphi(\tau((x,y))=\tilde{\tau}(x,y)$ for all $(x,y)\in U$ with
\begin{align*}
\tilde{\tau} \colon U^*\to U^* \colon (x,y)\mapsto (x\cdot y^{-1},y^{-1}).
\end{align*}
We find that $\mouf(U,\tau)$ is isomorphic to $\mouf(U,\tilde{\tau})$, which is indeed a Moufang set of type $\F_4$;
see \cite[Theorem 2.1]{DVM}.

\begin{remark}
It is an interesting open question whether or not all Moufang sets of mixed type $\F_4$ can be embedded in some algebraic Moufang set of type $\F_4$.
This problem can be reformulated to the question whether the extension of the mixed norm form $\N$ on $\oct_\mixed$ to the larger space $\oct_\ell$ remains anisotropic.
\end{remark}


\bigskip\hrule\bigskip
\small

\noindent
Elizabeth Callens \\
Ghent University, Dept.\@ of Mathematics \\
Krijgslaan 281 (S22), B-9000 Gent, Belgium \\
{\tt elcallen@cage.UGent.be}

\medskip

\noindent
Tom De Medts \\
Ghent University, Dept.\@ of Mathematics \\
Krijgslaan 281 (S22), B-9000 Gent, Belgium \\
{\tt tdemedts@cage.UGent.be}

\end{document}